\documentclass[letterpaper,11pt]{amsart}
\usepackage{amsmath,amsthm,amssymb,mathtools,setspace,dsfont,color, marginnote}
\usepackage{float,caption}
\usepackage{wrapfig}
\usepackage[backref=page]{hyperref}
\usepackage[noabbrev]{cleveref}
\usepackage[export]{adjustbox}
\renewcommand{\thispagestyle}[1]{}

\setlength{\textwidth}{6.5in}
\setlength{\textheight}{9.1in}
\setlength{\topmargin}{-.25in}
\setlength{\oddsidemargin}{0in}
\setlength{\evensidemargin}{0in}

\definecolor{amber}{rgb}{1.0, 0.49, 0.0}

\theoremstyle{plain}
\newtheorem{theorem}{Theorem}[section]

\newtheorem{lemma}[theorem]{Lemma}
\newtheorem{corollary}[theorem]{Corollary}
\theoremstyle{definition}
\newtheorem{definition}[theorem]{Definition}

\newtheorem{problem}[theorem]{Open Problem}
\theoremstyle{remark}
\newtheorem{remark}[theorem]{Remark}
\newtheorem{fact}[theorem]{Fact}

\newcommand{\R}{\mathbb{R}}

\DeclarePairedDelimiter\norm{\lVert}{\rVert}
\DeclarePairedDelimiter\abs{\lvert}{\rvert}
\DeclarePairedDelimiter{\ceil}{\lceil}{\rceil}
\DeclareMathOperator*{\argmin}{arg\,min} 
\DeclarePairedDelimiter\ang{\langle}{\rangle}

\usepackage[utf8]{inputenc} % allow utf-8 input
\usepackage[T1]{fontenc}    % use 8-bit T1 fonts
\usepackage{hyperref}       % hyperlinks
\usepackage{url}            % simple URL typesetting
\usepackage{booktabs}       % professional-quality tables
\usepackage{amsfonts}       % blackboard math symbols
\usepackage{nicefrac}       % compact symbols for 1/2, etc.
\usepackage{microtype}      % microtypography
\usepackage{xcolor}         % colors
\usepackage{algorithm}
\usepackage{algorithmic}

% For theorems and such
\usepackage{amsmath}
\usepackage{amssymb}
\usepackage{mathtools}
\usepackage{amsthm}

%%%%%%%%%%%%%%%%%%%%%%%%%%%%%%%%
% THEOREMS
%%%%%%%%%%%%%%%%%%%%%%%%%%%%%%%%

\keywords{}  
\subjclass[2010]{}

\begin{document}

\title[Approximate Real Symmetric Tensor Rank]{\mbox{}
\vspace{-1in}\\ 
Approximate Real Symmetric Tensor Rank}

\author{Alperen A. Erg\"ur, Jesus Rebollo Bueno, Petros Valettas}
\address{ }
\email{}
\thanks{}

\maketitle

\begin{abstract}
  We investigate the effect of an $\varepsilon$-room of perturbation tolerance on symmetric tensor decomposition. To be more precise, suppose a real symmetric $d$-tensor $f$, a norm $\norm{\cdot}$ on the space of symmetric $d$-tensors, and $\varepsilon >0$ are given.  What is the smallest symmetric tensor rank in the $\varepsilon$-neighborhood of $f$? In other words, what is the symmetric tensor rank of $f$ after a clever $\varepsilon$-perturbation? We prove two theorems and develop three corresponding algorithms that give constructive upper bounds for this question.  With expository goals in mind; we  present probabilistic and convex geometric ideas behind our results, reproduce some known results, and point out open problems.
\end{abstract}

\section{Introduction}
Tensors encode fundamental questions in mathematics and complexity theory, such as finding lower bounds on the matrix multiplication exponent, and have been extensively studied from this perspective \cite{derksen2016nuclear,landsberg2010ranks,pratt-waring,fulvio2022rank}.  In computational mathematics, tensor decomposition-based methods gained prominence in the 90s \cite{comon-independent}, and became a common tool for learning latent (hidden) variable models after \cite{anandkumar-tensor}. Tensor decomposition-based methods are now broadly used in application domains ranging from phylogenetics to community detection in networks. We suggest \cite{lim-survey} as an excellent survey for clarifying basic concepts and for many examples of tensor computations. Tensor decomposition-based methods are also used for a large range of tasks in machine learning such as training shallow and deep neural nets \cite{tensorshallownet,tensordeepnet}, ubiquitous applications of the moments method \cite{moitra-algorithmic, ge-survey}, computer vision applications \cite{tensorvision}, and much more: See \cite{acar-unsupervised, sidiropoulos-survey} for surveys of results available as of 2008 and 2017, respectively. 

As opposed to using arbitrary tensors without any structure, the usage of symmetric tensors appears as a common thread in wide-ranging applications of tensor-decomposition based methods. This is the main focus of our paper: the real symmetric decomposition of real symmetric tensors. Let us be more precise: 
\begin{definition} [Symmetric Tensor Rank]
Let $f$ be an $n$-variate real symmetric $d$-tensor and $S^{n-1}:= \{ u \in \mathbb{R}^n : \norm{u}_2=1 \}$. The smallest $m\in \mathbb N$ for which there exist $c_1,\ldots,c_m\in \mathbb R$ and $v_1, \ldots, v_m \in S^{n-1}$ so that
    \[
    f = \sum_{i=1}^m c_i \; \underbrace{v_i \otimes v_i \otimes \cdots \otimes v_i}_\text{d times}
    \]
is called the symmetric tensor rank and we denote this rank by ${\rm srank(f)}$.
\end{definition}
Symmetric tensor rank is sometimes called CAND in signal processing, and can be named real-Waring rank after identification with homogeneous polynomials \cite{eliassymmetric}. Analogous definitions for asymmetric tensors are called CANDECOMP, PARAFAC, or CP as a short for these two names, \cite{limsymmetric}.  We emphasize that in our definition real symmetric tensors are decomposed into rank-1 real tensors, whereas in basic references such as \cite{limsymmetric,eliassymmetric} the main focus is on the decomposition of real symmetric tensors into complex rank-one tensors. One reason for using complex decomposition is to be able to employ tools from algebraic geometry which work better on algebraically closed fields, see e.g. the delightful paper \cite{nie-lowrank}. Our aim in this paper is to use convex geometric tools to take advantage of the beauties of real geometry: being an ordered field makes the geometry over the reals (and rank notions) intrinsically different than the complex ones.
 
It is known that the tensor rank on reals is not stable under perturbation: It is typical/expected for designers of tensor decomposition algorithms to exercise caution not to let noise obscure a low-rank input tensor as a high-rank one. In a similar spirit to the smoothed analysis \cite{spielmansmoothed}, we suggest viewing the inherent existence of error in real number computations as an advantage rather than an obstacle. More formally, we propose to relax the ${\rm srank}$ notion with an $\varepsilon$-room of tolerance.
\begin{definition}[Approximate Symmetric Tensor Rank] \label{approximaterank}
Let $\norm{\cdot}$ denote a norm on the space of $n$-variate real symmetric $d$-tensors. Given a symmetric $d$-tensor $f$, we define the $\varepsilon$-approximate rank of $f$ with respect to $\norm{\cdot}$ as follows:
    \[
    {\rm srank}_{\norm{\cdot}, \varepsilon}(f) := 
    \min \{ {\rm srank}(h) : \| h-f\| \leq  \varepsilon \}.
    \]
\end{definition}
Our main results, Theorem \ref{thm:intro1}, Theorem \ref{thm:alg-greedy}, Theorem \ref{thm:intro2}, Theorem \ref{enhancedsparse}, and Corollary \ref{HSrank} show that ${\rm srank}_{\norm{\cdot}, \varepsilon}$ behave significantly different than its algebraic counterpart  ${\rm srank(f)}$. 

From an operational perspective, one might prefer to use an ``efficient'' family of norms instead of using an arbitrary norm as in definition \ref{approximaterank}. Although some of our theorems hold for arbitrary norms, our main focus is on perturbation with respect to $L_p$-norms. This is due to the existence of efficient quadrature rules to compute $L_p$-norms of symmetric tensors \cite{mauricio, townsend}. 

The rest of the paper is organized as follows: In section \ref{concepts} we introduce the vocabulary and basic concepts; in Section \ref{section:Energy}, Section \ref{section:Maurey}, and Section \ref{section:Frank-Wolfe} we present three constructive estimates, based on three different ideas, for approximate rank.  Section \ref{experiments} presents implementation details of the energy increment algorithm  (Algorithm \ref{alg:ga}). Finally, in  section 6 we consider an application to optimization.
 
\section{Mathematical Concepts} \label{concepts}
In this section, for the sake of clarity, we explicitly introduce all the mathematical notions that are used in this paper. 
\subsection{Basic Terminology and Monomial Index}
%%%%%%%%%%%%%%%%%%%%%%%%%
\label{basics}
Let $T^{d}(\mathbb{R}^n):= \mathbb{R}^{n} \otimes \mathbb{R}^{n} \otimes \cdots \otimes \mathbb{R}^n$ be the set of all $d$-tensors. Then, we consider the action of the symmetric group on the set $\{1,2,3,\ldots,d \}$, $\mathcal{S}_d$, on $T^{d}(\mathbb{R}^n)$ as follows: for $\sigma \in \mathcal{S}_d$ and $u^{(1)} \otimes u^{(2)} \otimes \ldots \otimes u^{(d)} \in T^{d}(\mathbb{R}^n)$ we have
\[ \sigma ( u^{(1)} \otimes u^{(2)} \otimes \cdots \otimes u^{(d)}) = u^{(\sigma(1))} \otimes u^{(\sigma(2))} \otimes \cdots \otimes u^{(\sigma(d))}.\]
The action of $\mathcal{S}_d$ extends linearly to the entire space $T^{d}(\mathbb{R}^n)$. A tensor $A \in T^{d}(\mathbb{R}^n)$ is called a symmetric tensor if $\sigma(A) = A$ for all $\sigma \in \mathcal{S}_d$. We denote the vector space of symmetric $d$-tensors on $\R^n$ by $P_{n,d}$. Equivalently, one can think about this space as the span of self-outer products of vectors $v \in \mathbb{R}^n$, that is,
\[ 
    P_{n,d}:= {\rm span} \{ \underbrace{v \otimes v \otimes v \otimes \cdots \otimes v}_\text{d times} \mid v \in \mathbb{R}^n \}.
\]

Now we pose the following question: Given a rank-1 symmetric tensor $v \otimes v \otimes v \in P_{n,3}$, what is the difference between $[v \otimes v \otimes v]_{1,2,1}$ and $[v \otimes v \otimes v]_{1,1,2}$? Due to symmetry, these two entries are equal. Likewise, for any element $A$ in $P_{n,d}$, two entries $a_{i_1,i_2,\ldots,i_d}$ and $a_{j_1,j_2,\ldots,j_d}$ are identical whenever $\{ i_1 , i_2, \ldots, i_d \}$ and $\{ j_1 ,j_2, \ldots, j_d \}$ are equal as supersets. This allows the use of monomial index: A superset $\{ i_1 , i_2, \ldots, i_d \}$ is identified with a monomial $x^{\alpha}:=x_1^{\alpha_1} x_2^{\alpha_2} \ldots x_n^{\alpha_n}$, where $\alpha=(\alpha_1, \ldots, \alpha_n)$, $\alpha_j$ is the number of $j$'s in $\{ i_1 , i_2, \ldots, i_d \}$, and $d=\alpha_1+\alpha_2+\ldots+\alpha_n$ is the degree of the monomial. In the monomial index, instead of listing $\binom{d}{\alpha}$ equal entries for all the supersets identified with  $x^{\alpha}$, we only list the sum of these entries once.
%%%%%%%%%%%%%%%%%%%%%%%%%
\subsection{Euclidean and Functional Norms}
%%%%%%%%%%%%%%%%%%%%%%%%%
\label{functionalnorms}
For $f \in P_{n,d}$ and $x \in S^{n-1}$, when we write $f(x)$ we mean $f$ applied to $[x,x,\ldots,x]$ as a symmetric multilinear form. For $r\in[2,\infty)$, the $L_r$ functional norms on $P_{n,d}$ are defined as 
\[ 
    \norm{f}_r := \left( \int_{S^{n-1}} |f(x)|^r \; \sigma(x) \right)^{1/r}, \quad f \in P_{n,d},
\]
where $\sigma$ is the uniform probability measure on the sphere $S^{n-1}$. The $L_{\infty}$-norm on $P_{n,d}$ is defined by
\[ 
    \norm{f}_{\infty} := \max_{v \in S^{n-1}} \abs{f(v)}. 
\]
For all $L_r$-norms, we use $B_r$ to denote the unit ball of the space $(P_{n,d}, \|\cdot\|_r)$. That is, 
\[
    B_r := \{ p \in P_{n,d} : \norm{p}_r \leq 1 \}.
\] 
We recall an important fact about $L_r$-norms of symmetric tensors established in \cite{barvinok}.

\begin{lemma}[Barvinok] \label{lem:Barv-ineq}
Let $g \in P_{n,d}$, then we have 
\begin{align*} 
\norm{g}_{2k} \leq \norm{g}_{\infty} \leq \binom{kd + n -1}{kd}^{\frac{1}{2k}} \norm{g}_{2k}.
\end{align*}
In particular, for $k \geq n \log(ed)$ we have
\begin{align*}
	\norm{g}_{2k} \leq \norm{g}_{\infty} \leq c \norm{g}_{2k}
\end{align*}
for some constant $c$.
\end{lemma}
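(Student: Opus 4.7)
The first inequality $\norm{g}_{2k} \le \norm{g}_{\infty}$ is immediate, since $|g(x)|^{2k} \le \norm{g}_{\infty}^{2k}$ pointwise on $S^{n-1}$ and $\sigma$ is a probability measure.

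For the nontrivial bound, the plan is to lift $g$ to a higher-degree polynomial and apply a reproducing-kernel (Christoffel function) argument. Set $h := g^k$; then $h \in P_{n,kd}$, a vector space of dimension $N := \binom{kd+n-1}{kd}$. Equip $P_{n,kd}$ with the $L_2(\sigma)$ inner product, and pick an orthonormal basis $e_1,\ldots,e_N$. For any $p = \sum_i a_i e_i \in P_{n,kd}$ and any $x \in S^{n-1}$, Cauchy--Schwarz yields
\[
    |p(x)|^2 \;\le\; \Bigl(\sum_i a_i^2\Bigr)\Bigl(\sum_i e_i(x)^2\Bigr) \;=\; \norm{p}_2^2 \cdot K(x,x),
\]
where $K(x,y) = \sum_i e_i(x)e_i(y)$ is the reproducing kernel. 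The kernel is basis-independent, and since the map $p \mapsto p\circ O$ is an $L_2$-isometry of $P_{n,kd}$ for every orthogonal $O$, we have $K(Ox,Ox) = K(x,x)$; hence $K(x,x)$ is constant on $S^{n-1}$. Integrating the trace identity $\int_{S^{n-1}} K(x,x)\, d\sigma = \sum_i \norm{e_i}_2^2 = N$ shows this constant equals $N$. Applying the resulting inequality $|h(x)|^2 \le N \norm{h}_2^2$ to $h = g^k$ gives
\[
    |g(x)|^{2k} \;\le\; N \int_{S^{n-1}} g(y)^{2k}\, d\sigma(y) \;=\; N\,\norm{g}_{2k}^{2k},
\]
and taking the maximum over $x$ and the $2k$-th root yields the claimed bound.

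For the ``in particular'' clause, I would estimate $N^{1/(2k)}$ using $\binom{kd+n-1}{n-1} \le \bigl(e(kd+n-1)/(n-1)\bigr)^{n-1}$. Split into the easy case $kd \le n-1$ (where $N \le 4^{n-1}$ and the hypothesis $k \ge n\log(ed)$ forces $(n-1)/(2k) \le 1/2$), and the main case $kd \ge n-1$, where with $t := (n-1)/k \le 1/\log(ed)$ one gets
\[
    \frac{1}{2k}\log N \;\le\; \frac{t}{2}\log\!\Bigl(\frac{2ed}{t}\Bigr) \;=\; \frac{t\log(2ed)}{2} - \frac{t\log t}{2}.
\]
The first term is bounded by $(1+\log 2)/2$ using $t\log(ed) \le 1$, and the second is bounded by $1/(2e)$ since $-t\log t$ is maximized at $t = 1/e$. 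Exponentiating gives an absolute constant $c$, as required.

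The only step that is not purely computational is the rotational invariance of $K(x,x)$; this is where one has to be slightly careful, since the argument uses that the Christoffel function is intrinsic to the inner-product space and not to the chosen basis. Everything else reduces to Cauchy--Schwarz and standard binomial estimates, so I expect no serious obstacle beyond bookkeeping.
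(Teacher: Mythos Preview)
Your argument is correct. The paper itself does not supply a proof of this lemma; it merely records it as a known fact with a citation to Barvinok, so there is nothing to compare against line by line. What you have written is essentially the standard Christoffel--function argument behind Barvinok's inequality: pass to $h=g^k\in P_{n,kd}$, use Cauchy--Schwarz against an $L_2(\sigma)$-orthonormal basis to bound $|h(x)|^2$ by $\|h\|_2^2\,K(x,x)$, and then exploit rotational invariance of $\sigma$ together with the basis-independence of the reproducing kernel to conclude that $K(x,x)\equiv N=\dim P_{n,kd}$. Your treatment of the ``in particular'' clause via the splitting on $kd$ versus $n-1$ and the bound on $-t\log t$ is also fine (and in fact the case $kd\le n-1$ is vacuous once $k\ge n\log(ed)$, so only your main case matters). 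The only point worth flagging is the one you already single out: the invariance step relies on the fact that $p\mapsto p\circ O$ is an $L_2$-isometry of $P_{n,kd}$ onto itself, which sends orthonormal bases to orthonormal bases, whence $K(Ox,Ox)=K(x,x)$ by kernel uniqueness. You have this right.
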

\begin{definition}[Hilbert-Schmidt in the monomial index]
Let $p,q \in P_{n,d}$ indexed using the monomial notation, that is $p=[ b_\alpha ]_{\alpha}$ and $q = [ c_\alpha ]_{\alpha}$ where $\alpha \in \mathbb{Z}_{\geq 0}^{n}$ satisfies $\abs{\alpha}:=\alpha_1+\ldots+\alpha_n=d$. Then, the Hilbert-Schmidt inner product of $p$ and $q$ is given by
\[
\langle p,q\rangle_{\rm HS}: = \sum_{|\alpha|=d} \frac{b_\alpha c_\alpha}{\binom{d}{\alpha}}.
\]
\end{definition}
Note that in algebraic geometry literature this norm is named as Bombieri-Weyl norm.
% Clearly, $p_v(x) = \langle x,v\rangle^d = \sum_{|\alpha|=d} {d\choose \alpha }v^\alpha x^\alpha$. 
% Therefore, if  $g(x) = \sum_{|\alpha|=d} \lambda_\alpha x^\alpha$, then
% \begin{align*}
%     \langle g, p_v \rangle_W = \sum_{|\alpha| = d} \frac{{d \atopwithdelims () \alpha}{ v^\alpha \lambda_\alpha}}{{d \choose \alpha}}= \lambda_\alpha v^\alpha =g(v).
% \end{align*} 
Now, for simplicity, we define $q_v := \underbrace{v \otimes v \otimes v \otimes \cdots \otimes v}_\text{d times}$ for a $v \in S^{n-1}$, then we have the following identity: 
\begin{align} \label{nuclear}
	\max_{v\in S^{n-1}} |\langle g, q_v \rangle_{\rm HS}| = \max_{v\in S^{n-1}} |g(v)| = \|g\|_{\infty}.
\end{align} 
%%%%%%%%%%%%%%%%%%%%%%%%%
\subsection{Nuclear Norm and Veronese Body} %%%%%%%%%%%%%%%%%%%%%%%%%
\label{nuclear-norm}

We start this section by recalling the connection between norms and the geometry of the corresponding unit balls. Every centrally symmetric convex body $K \subset \mathbb{R}^n$ induces a unique norm, that is, for $x\in\R^n$
    \begin{equation}
    \label{gauge-norm}
        \norm{x}_K := \min \{\lambda >0 : x \in \lambda K \}. 
    \end{equation}
For every $v \in S^{n-1}$ we have two associated symmetric tensors: $p_v = v \otimes v \otimes v \otimes \cdots \otimes v$ and $-p_v$. Using the terminology established in \cite{orbitopes} we define the Veronese body, $V_{n,d}$, as follows:
\begin{equation} \label{Veronese}
 V_{n,d} := {\rm conv} \{ \pm p_v : v \in S^{n-1} \}. 
\end{equation}
The norm introduced by the convex body $V_{n,d}$, $\norm{.}_{V_{n,d}}$, is called the nuclear norm and it is usually denoted in the literature by  $\norm{.}_{*}$. It follows from \eqref{gauge-norm} that for every $q\in P_{n,d}$ we have
    \[ 
    \norm{q}_{*} 
    = 
    \min \left \{ \sum_{i=1}^m \abs{\lambda_i} : q = \sum_{i=1}^m \lambda_i p_{v_i} , \; v_i \in S^{n-1} \right \};
    \] 
for background material on these facts see Section 3 of the survey \cite{Go-surv}. Considering \eqref{nuclear}, one may notice that for every $q \in P_{n,d}$
    \[
    \norm{q}_{\infty}
    = 
    \max_{f \in V_{n,d}} \langle q , f \rangle_{\rm HS},
    \]
meaning that the norm introduced by $V_{n,d}$ on $P_{n,d}$ is dual to the $L_{\infty}$-norm. Then, by the duality of the norms $\norm{.}_{\infty}$ and $\norm{.}_{*}$, for every $g \in P_{n,d}$ we have
    \begin{equation} \label{nuclearSDP}
    \norm{g}_{*} 
    = 
    \max_{q \in B_{\infty}} \langle g ,q \rangle_{\rm HS}.
    \end{equation}
Formulation (\ref{nuclearSDP}) suggests a semi-definite programming approach for computing $\norm{.}_{*}$ by approximating $B_{\infty}$ with the sum of squares hierarchy. Note that this approach would yield \textit{lower bounds} for the nuclear norm that improve as the degree of sum of squares hierarchy is increased. Luckily for us, this increasing lower bounds via sum of squares idea is already made rigorous by an expert in the field and can be implemented using any semidefinite programming software \cite{nienuclear}.
\subsection{Type-2 Constant of a Norm} \label{type-2-constant}
The type-2 constant allows us to create a sparse randomly constructed approximation to a given vector with controlled error; the definition of the type-2 constant carries an essential idea to control the trade-off between error and sparsity. We will give more details and intuition on this matter in section 4. To define the type-2 constant we first need to recall that a Rademacher random variable  $\xi$ is defined by 
    \[
    \mathbb P(\xi = -1) 
    = 
    \mathbb P( \xi = 1) 
    =
    1/2.
    \]
\begin{definition} [type-2 constant] \label{type-2}
Let $\|\cdot\|$ be a norm on $\mathbb R^n$. The type-2 constant of $X=(\mathbb R^n, \|\cdot\|)$, denoted by $T_2(X)$, is the smallest possible $T>0$ such that for any $m\in\mathbb{N}$ and any collection of vectors $x_1, \ldots,x_m \in \mathbb R^n$ one has
% \marginpar{\footnotesize Comment on the Gaussian variant}	
	\begin{align}
		\mathbb E_{\xi_1,\ldots,\xi_m} \left \| \sum_{i=1}^m \xi_i x_i \right\|^2 \leq T^2 \sum_{i=1}^m \|x_i\|^2,
	\end{align} 
where $\xi_i$, $i=1,2,\ldots,m$, are independent Rademacher random variables. 
\end{definition}

\begin{lemma}[Properties of Type-2 Constant \cite{Led-Tal-book,tomczak1989banach}]
\label{type2-prop}
\begin{enumerate}
\item Let $A$ be an invertible linear map. \\ If $\norm{x}_D:= \norm{A^{-1}x}_K$ for all $x \in X$ then $T_2(X,\norm{.}_D)=T_2(X,\norm{.}_K)$ 
 \item Every Euclidean norm has type-2 constant 1.
\item If $Y$ is a subspace of $X$, then $T_2(Y) \leq T_2(X)$.
		
		\item If $X$ is $n$-dimensional, then $T_2(X) \leq \sqrt{n}$, and $\ell_1$-norm has type-2 constant $\sqrt{n}$.
		\item Let $2 \leq p < \infty$. Then, $T_2(\ell_p^n) \lesssim \sqrt{\min\{p,\log n\}}$, where $\ell_p^n = (\mathbb R^n, \|\cdot\|_p)$. 
	\end{enumerate}

\end{lemma}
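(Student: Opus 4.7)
The plan is to dispatch the five items in order, moving from routine bookkeeping toward one genuinely probabilistic estimate. For (1), I would unravel the definitions: if $y_i := A^{-1}x_i$, then $\|\sum_i \xi_i x_i\|_D = \|A^{-1}\sum_i \xi_i x_i\|_K = \|\sum_i \xi_i y_i\|_K$ and $\|x_i\|_D = \|y_i\|_K$. Since $A$ is invertible, the families $(x_i)$ and $(y_i)$ are in bijection, so the smallest $T$ witnessing the type-2 inequality is the same for both norms. For (2), I would expand $\mathbb{E}\|\sum_i \xi_i x_i\|_2^2 = \sum_{i,j}\mathbb{E}[\xi_i \xi_j]\,\langle x_i, x_j\rangle$, use $\mathbb{E}[\xi_i \xi_j] = \delta_{ij}$, and collect $\sum_i \|x_i\|_2^2$. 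For (3), the norm on $Y$ is the restriction of $\|\cdot\|_X$, so the type-2 inequality for $X$ applied to any finite family in $Y$ already witnesses $T_2(Y) \leq T_2(X)$.

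Item (4) is where the first real idea appears. For the upper bound I would invoke John's theorem: there is an ellipsoid $\mathcal{E} \subseteq B_X \subseteq \sqrt{n}\,\mathcal{E}$, where $B_X$ denotes the unit ball of $X$; equivalently, $\|\cdot\|_X \leq \|\cdot\|_{\mathcal{E}} \leq \sqrt{n}\,\|\cdot\|_X$. Since $\|\cdot\|_{\mathcal{E}}$ is Euclidean, parts (1) and (2) give
\[
\mathbb{E}\left\|\sum_i \xi_i x_i\right\|_X^2 \leq \mathbb{E}\left\|\sum_i \xi_i x_i\right\|_{\mathcal{E}}^2 = \sum_i \|x_i\|_{\mathcal{E}}^2 \leq n \sum_i \|x_i\|_X^2.
\]
Sharpness for $\ell_1^n$ follows by taking $x_i = e_i$: then $\|\sum_i \xi_i e_i\|_1 = n$ almost surely, so the ratio $\mathbb{E}\|\sum_i \xi_i e_i\|_1^2 / \sum_i \|e_i\|_1^2$ equals $n$, forcing $T_2(\ell_1^n) \geq \sqrt{n}$.

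For (5), I would split on whether $p \leq \log n$ or $p \geq \log n$. In the first regime I would apply the scalar Khintchine inequality $(\mathbb{E}|\sum_i \xi_i a_i|^p)^{1/p} \lesssim \sqrt{p}\,(\sum_i a_i^2)^{1/2}$ coordinate-wise, use Jensen (legitimate because $p \geq 2$) to pass from the $L^p$ moment back to the $L^2$ moment, and apply the triangle inequality in $\ell_{p/2}$ to the coordinate-square vectors $(x_i(j)^2)_j$, producing
\[
\mathbb{E}\left\|\sum_i \xi_i x_i\right\|_p^2 \leq \left(\mathbb{E}\left\|\sum_i \xi_i x_i\right\|_p^p\right)^{2/p} \lesssim p \sum_i \|x_i\|_p^2,
\]
so $T_2(\ell_p^n) \lesssim \sqrt{p}$. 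In the second regime, the sandwich $\|\cdot\|_\infty \leq \|\cdot\|_p \leq n^{1/p}\|\cdot\|_\infty \leq e\,\|\cdot\|_\infty$ reduces matters to $\ell_\infty^n$, where a subgaussian maximal bound gives $\mathbb{E}\max_{j \leq n}|\sum_i \xi_i x_i(j)|^2 \lesssim \log n \cdot \max_j \sum_i x_i(j)^2 \leq \log n \sum_i \|x_i\|_\infty^2$, so $T_2(\ell_\infty^n) \lesssim \sqrt{\log n}$, and norm equivalence transfers the bound back to $\ell_p^n$.

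The substantive work sits entirely in (5): items (1)--(3) are definition chasing and one moment computation with independent signs, and (4) is one appeal to John's theorem on top of (2). In (5) the delicate step is calibrating the two regimes and using that $p \geq 2$ is exactly what makes both the coordinate-wise Khintchine estimate and the subsequent Jensen passage simultaneously legal.
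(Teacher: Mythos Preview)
Your proof is correct. Note, however, that the paper does not actually supply a proof of this lemma: it is stated with citations to \cite{Led-Tal-book,tomczak1989banach} and used as a black box. So there is no ``paper's own proof'' to compare against; you have filled in what the paper delegates to the literature.

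Your arguments are the standard ones. A few remarks: the Khintchine-plus-triangle-in-$\ell_{p/2}$ route you use for item (5) is essentially the same mechanism the paper \emph{does} spell out later, in the proof of Theorem~\ref{type-estimate}, for the functional $L_r$-norms on $P_{n,d}$ (there phrased via a variational identity for $\ell_{p}$-sums rather than the triangle inequality in $\ell_{p/2}$, but these are equivalent). Your treatment of the $p\geq \log n$ regime via the subgaussian maximal inequality for $\ell_\infty^n$ is also standard and correct; the inequality $\max_j \sum_i x_i(j)^2 \leq \sum_i \max_j x_i(j)^2$ that you implicitly use is immediate. For item (4), your combination of John's theorem for the upper bound and the basis vectors $e_1,\ldots,e_n$ for the matching lower bound in $\ell_1^n$ is exactly the textbook argument.
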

\section{Approximate Rank Estimate via Energy Increment} \label{section:Energy}
Energy increment is a general strategy in additive combinatorics to set up a greedy approximation to an a priori unknown object, see \cite{Tao-RS}. Our theorems and algorithms in this section are inspired by the energy increment method as we explain below. We begin by presenting an approximate rank estimate for $L_r$-norms.
\begin{theorem} \label{thm:intro1}
For $r\in[2,\infty]$, $\norm{.}_r$ denotes the $L_r$-norm on $P_{n,d}$. Then, for any $f \in P_{n,d}$ and $\varepsilon > 0$ we have
\[ 
    { \rm srank}_{\norm{\cdot}_r, \varepsilon}(f) \leq \frac{\norm{f}_{\rm HS}^2}{\varepsilon^2},
\]
where $\norm{\cdot}_{\rm HS}$ denotes the Hilbert-Schmidt norm.
\end{theorem}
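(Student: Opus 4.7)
The plan is to run a greedy ``energy increment'' procedure, tracking the squared Hilbert--Schmidt norm of the residual. Set $g_0 := f$ and, as long as $\norm{g_k}_r > \varepsilon$, pick $v_{k+1} \in S^{n-1}$ attaining $\max_{v \in S^{n-1}} |\langle g_k, q_v \rangle_{\rm HS}|$ (which exists by compactness of $S^{n-1}$ and continuity), set $c_{k+1} := \langle g_k, q_{v_{k+1}} \rangle_{\rm HS}$, and update $g_{k+1} := g_k - c_{k+1}\, q_{v_{k+1}}$. If the loop stops after $m$ iterations, output $h := \sum_{i=1}^m c_i q_{v_i}$. Since the definition of symmetric tensor rank allows arbitrary real scalars $c_i$, one has ${\rm srank}(h) \leq m$, and $\norm{f - h}_r = \norm{g_m}_r \leq \varepsilon$ by the termination criterion.

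The key computation is a one-step Pythagoras in the Hilbert--Schmidt inner product. Since $\langle q_v, q_v \rangle_{\rm HS} = (\norm{v}_2^2)^d = 1$ for every $v \in S^{n-1}$, the scalar $c_{k+1}$ is exactly the orthogonal projection coefficient of $g_k$ onto $\mathbb{R}\, q_{v_{k+1}}$, and hence
\[
\norm{g_{k+1}}_{\rm HS}^2 \;=\; \norm{g_k}_{\rm HS}^2 \;-\; \langle g_k, q_{v_{k+1}} \rangle_{\rm HS}^2.
\]
By the choice of $v_{k+1}$ combined with identity \eqref{nuclear}, the subtracted term equals $\norm{g_k}_\infty^2$. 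Because $\sigma$ is a probability measure on $S^{n-1}$, monotonicity of $L_r$-norms gives $\norm{g_k}_\infty \geq \norm{g_k}_r$ for every $r \in [2, \infty]$, so while the loop is running $\langle g_k, q_{v_{k+1}} \rangle_{\rm HS}^2 \geq \norm{g_k}_r^2 > \varepsilon^2$.

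Iterating, each non-terminating step strictly decreases $\norm{g_k}_{\rm HS}^2$ by more than $\varepsilon^2$, while the energy stays non-negative; starting from $\norm{f}_{\rm HS}^2$ this forces termination within $m \leq \norm{f}_{\rm HS}^2/\varepsilon^2$ iterations, which is exactly the stated bound. I do not anticipate a substantive obstacle. The reason the estimate is so clean is a fortunate alignment of three facts: $\langle q_v, q_v\rangle_{\rm HS} = 1$ for $v \in S^{n-1}$; the identification $\norm{g}_\infty = \max_v |\langle g, q_v\rangle_{\rm HS}|$ from \eqref{nuclear}; and the domination $\norm{\cdot}_r \leq \norm{\cdot}_\infty$ on a probability space. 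If any of these were to fail, the bound would acquire a dimension- or degree-dependent factor, so the only point in the write-up that really deserves care is a sanity check that the monomial-index Hilbert--Schmidt convention of Section~\ref{functionalnorms} does yield $\norm{q_v}_{\rm HS} = 1$ on the unit sphere.
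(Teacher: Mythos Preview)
Your proof is correct and follows essentially the same energy-increment strategy as the paper. The only variation is that at each step you subtract the one-dimensional projection of the residual onto $\mathbb{R}\,q_{v_{k+1}}$, whereas the paper (via Lemma~\ref{lem:ga} and Algorithm~\ref{alg:ga}) projects $f$ onto the full span $W_k=\mathrm{span}\{q_{v_1},\ldots,q_{v_k}\}$; both variants drop the energy by more than $\varepsilon^2$ per step and hence give the identical bound $m\leq \norm{f}_{\rm HS}^2/\varepsilon^2$.
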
 
One may wonder why this result is interesting for all $L_r$-norms when it takes the strongest form for $r=\infty$. The reason is, of course, the computational complexity. Symmetric tensors that are close to each other in terms of $L_{\infty}$-distance behave almost identical as homogeneous functions on $S^{n-1}$, but it is NP-Hard to compute $L_{\infty}$-distance for $d \geq 4$. For $r > n \log(ed)$ the norms $L_r$ and $L_{\infty}$ on $P_{n,d}$ are equivalent, see Lemma \ref{lem:Barv-ineq}. So we only hope to be able to compute approximate decomposition for $L_r$ where $r$ is not proportional to $n$. Algorithm \ref{alg:ga} and Theorem \ref{thm:alg-greedy} below delineate the trade-off between the tightness of the estimate depending on $r$ and the cost of computation.

Now  we present our energy increment algorithm. In Algorithm \ref{alg:ga}, $\Pi_W$ denotes the orthogonal projection on  the subspace $W$ with respect to the Hilbert-Schmidt norm, and $q_v :=  \underbrace{v \otimes v \otimes \cdots \otimes v}_\text{d times}$. 
\begin{algorithm} [H]
	\caption{Approximate Rank via Energy Increment} \label{alg:ga}
	\begin{algorithmic}[1]
	\STATE{\bfseries Input} $f\in P_{n,d}$, $\norm{.}_r$ for $2 \leq r \leq \infty$, and  $\varepsilon >0$.
	\STATE Initialize $\tilde{f}=0$, $e=\infty$, $W=\{ 0 \}$.
	\REPEAT
	\STATE Find a $v \in S^{n-1}$ such that $\frac{1}{2} \norm{f-\tilde{f}}_r \leq \abs{(f-\tilde{f})(v)}$.
	\STATE $W = {\rm span} ( W \cup \{q_v \})$  \\
	$\tilde{f} = \Pi_{W} (f)$ ,  $e = \norm{f-\tilde{f}}_r$ \\
	\UNTIL{$e < \varepsilon$}
	\STATE {\bfseries Output} $\tilde{f}$ 
    \STATE {\bf Post-condition} $\|f-\tilde f\|_r <\varepsilon$, and ${\rm srank}(\tilde f) \leq \varepsilon^{-2}\|f\|_{\rm HS}^2$
	\end{algorithmic}
\end{algorithm}

Details on the implementation of steps in Algorithm \ref{alg:ga} are explained in Section \ref{experiments} alongside some experimental results. Our next theorem gives a sampling approach for the search step (4).
\begin{theorem} \label{sample-r}
Let $n,d\geq 1$ and $2\leq r\leq n\log(ed)$. Let $p \in P_{n,d}$ and suppose $v_1,v_2,\ldots,v_N$ are vectors that are sampled independently from the uniform probability measure on the sphere $S^{n-1}$. Then, we have
\[ \mathbb{P} \left( \max_{i \leq N} \abs{p(v_i)} \geq \frac{1}{2} \norm{p}_r \right)  \geq 1 - \exp \left( -N /[ \alpha(n,d,r)]^{2r}\right),
\] where $\alpha(n,d,r) := \min \{ (c_1r)^{d/2} , {\binom{rd + n-1}{rd}}^{\frac{1}{2r}} \}$ for a constant $c_1$. In particular, if $N \geq t [\alpha(n,d,r)]^{2r}$ we have 
\[ \mathbb{P} \left( \max_{i \leq N} \abs{p(v_i)} \geq \frac{1}{2} \norm{p}_r \right)  \geq 1 - e^{-t}.
\]
\end{theorem}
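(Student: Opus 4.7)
\textbf{Proof plan for Theorem \ref{sample-r}.}

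The plan is to run a Paley--Zygmund (second-moment) argument on a single sample, control the resulting ratio $\|p\|_{2r}/\|p\|_r$ by the two quantities appearing inside the minimum that defines $\alpha(n,d,r)$, and finally lift the single-sample estimate to an exponential bound via independence.

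\emph{Step 1 (Paley--Zygmund on one sample).} Let $v$ be uniform on $S^{n-1}$ and set $Z = |p(v)|^r$, so that $Z\ge 0$, $\mathbb E[Z] = \|p\|_r^r$ and $\mathbb E[Z^2] = \|p\|_{2r}^{2r}$. Applying the Paley--Zygmund inequality with threshold $\theta = 2^{-r}$ gives
\[
\mathbb P\!\left(|p(v)| \ge \tfrac12 \|p\|_r\right) \;=\; \mathbb P\!\left(Z \ge 2^{-r}\mathbb E[Z]\right) \;\ge\; (1-2^{-r})^2 \,\frac{\|p\|_r^{2r}}{\|p\|_{2r}^{2r}} \;\ge\; \frac{9}{16}\left(\frac{\|p\|_r}{\|p\|_{2r}}\right)^{2r},
\]
since $r\ge 2$. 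Thus the whole estimate reduces to bounding $\|p\|_{2r}/\|p\|_r$.

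\emph{Step 2 (two bounds on the ratio).} I would produce the two entries of $\alpha(n,d,r)$ by two independent routes:

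(a) \emph{Hypercontractivity.} Beckner-type hypercontractivity on $S^{n-1}$ states that any degree-$d$ polynomial $p$ satisfies $\|p\|_{2r} \le (2r-1)^{d/2}\|p\|_2 \le (2r-1)^{d/2}\|p\|_r$, where the last step uses $r\ge 2$ and the fact that $\sigma$ is a probability measure. Hence $\|p\|_{2r}/\|p\|_r \le (c_1 r)^{d/2}$ for a universal constant $c_1$.

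(b) \emph{Barvinok (Lemma \ref{lem:Barv-ineq}).} Write $\|p\|_{2r}^{2r} = \int |p|^{2r}\,d\sigma \le \|p\|_\infty^{\,r}\,\|p\|_r^{\,r}$ so that $(\|p\|_{2r}/\|p\|_r)^{2r} \le (\|p\|_\infty/\|p\|_r)^{r}$. Then applying Lemma \ref{lem:Barv-ineq} with $k = \lfloor r/2\rfloor$ gives $\|p\|_\infty \le \binom{kd+n-1}{kd}^{1/(2k)} \|p\|_{2k} \le \binom{kd+n-1}{kd}^{1/(2k)} \|p\|_r$. Plugging this in and using monotonicity of $m \mapsto \binom{m+n-1}{m}$ to absorb any loss from rounding $r/2$ up to $r$ yields $(\|p\|_{2r}/\|p\|_r)^{2r} \le \binom{rd+n-1}{rd}$. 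Taking the minimum of (a) and (b) gives $\|p\|_{2r}/\|p\|_r \le \alpha(n,d,r)$.

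\emph{Step 3 (independence).} Combining Steps 1--2, each sample satisfies $\mathbb P(|p(v_i)|\ge \tfrac12\|p\|_r) \ge c\,[\alpha(n,d,r)]^{-2r}$ for an absolute constant $c \in (0,1]$. By independence,
\[
\mathbb P\!\left(\max_{i\le N} |p(v_i)| < \tfrac12 \|p\|_r\right) \le \left(1 - \frac{c}{[\alpha(n,d,r)]^{2r}}\right)^{\!N} \le \exp\!\left(-\frac{cN}{[\alpha(n,d,r)]^{2r}}\right).
\]
The constant $c$ is then absorbed into $c_1$ in the definition of $\alpha$ (equivalently, by slightly enlarging the binomial factor), yielding exactly the bound in the statement. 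The ``in particular'' clause is then immediate by choosing $N \ge t[\alpha(n,d,r)]^{2r}$.

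\emph{Main obstacle.} The arithmetic in Step 1 and Step 3 is routine; the real work is Step 2(b), where one must be slightly careful about the parity of $r$ when applying Lemma \ref{lem:Barv-ineq} (which is stated for integer $k$) so that the exponent $rd$ inside the binomial coefficient, rather than some $\lceil rd/2\rceil$, comes out. Monotonicity of the binomial in its top argument is what saves the day and lets one state a clean bound uniformly in $r$.
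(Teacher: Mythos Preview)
Your proof follows exactly the paper's route: Paley--Zygmund on a single sample, then two bounds on $\|p\|_{2r}/\|p\|_r$ (hypercontractivity/reverse H\"older gives $(c_1r)^{d/2}$; Barvinok's Lemma~\ref{lem:Barv-ineq} gives the binomial term), then independence. The only cosmetic difference is that the paper applies Barvinok more directly via $\|p\|_{2r}\le\|p\|_\infty\le\binom{rd+n-1}{rd}^{1/(2r)}\|p\|_r$, skipping your intermediate H\"older step $\|p\|_{2r}^{2r}\le\|p\|_\infty^r\|p\|_r^r$ and the parity discussion.
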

The proof of Theorem \ref{sample-r} is included in section \ref{samplesize}. As a consequence of Theorem \ref{sample-r} and the bounds obtained in the proof of Theorem \ref{thm:intro1}  we have the following result on Algorithm \ref{alg:ga}.
\begin{theorem} \label{thm:alg-greedy}
For a given $f \in P_{n,d}$ and $r \in [2,\infty]$, 
\begin{itemize}
    \item Algorithm \ref{alg:ga} takes at most $\frac{\norm{f}_{\rm HS}^2}{\varepsilon^2}$ many loops before terminating;
    \item for step (4) in Algorithm \ref{alg:ga}: searching over a uniform sample on $S^{n-1}$ with size $N \geq t [ \alpha(n,d,r)]^{2r}$, where $\alpha(n,d,r)$ as in Theorem \ref{sample-r}, yields a point $v \in S^{n-1}$ such that $\frac{1}{2}\norm{f}_r \leq \abs{f(v)}$ with probability at least $1 - e^{-t}$;
    \item the output $\tilde{f} $ of Algorithm \ref{alg:ga} satisfies the following properties:
    \[ 
        \norm{f-\tilde{f}}_r \leq \varepsilon \; , \; {\rm srank}(\tilde{f}) \leq \#\{ \text{\rm loops before termination of Algorithm 1}\}\leq \frac{\norm{f}_{\rm HS}^2}{\varepsilon^2}. 
    \]
\end{itemize}
\end{theorem}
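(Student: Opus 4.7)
The plan is to derive all three bullets from a single energy-increment inequality in the Hilbert--Schmidt norm, combined with a direct invocation of Theorem \ref{sample-r}. The central claim I would prove first is that at every loop the squared HS-distance decreases by at least $|(f-\tilde f_{\rm old})(v)|^2$. Writing $W_{\rm new} = W_{\rm old} + {\rm span}(q_v)$ and using that $\tilde f_{\rm old}$ and $\tilde f_{\rm new}$ are orthogonal projections onto nested subspaces, Pythagoras gives
\[ \|f - \tilde f_{\rm new}\|_{\rm HS}^2 = \|f - \tilde f_{\rm old}\|_{\rm HS}^2 - \|\tilde f_{\rm new} - \tilde f_{\rm old}\|_{\rm HS}^2. \]
Setting $u := \Pi_{W_{\rm old}^\perp}(q_v)$, the increment $\tilde f_{\rm new} - \tilde f_{\rm old}$ is the one-dimensional projection of $f - \tilde f_{\rm old}$ onto ${\rm span}(u)$, so its HS-norm equals $|\langle f-\tilde f_{\rm old}, u\rangle_{\rm HS}|/\|u\|_{\rm HS}$. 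Since $f - \tilde f_{\rm old}\perp W_{\rm old}$, the inner product equals $\langle f-\tilde f_{\rm old}, q_v\rangle_{\rm HS} = (f-\tilde f_{\rm old})(v)$, and the inequality $\|u\|_{\rm HS} \leq \|q_v\|_{\rm HS} = \|v\|_2^d = 1$ yields the claimed energy drop.

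For the first bullet, I would combine this drop with the selection rule at step (4), $|(f-\tilde f_{\rm old})(v)| \geq \tfrac12\|f-\tilde f_{\rm old}\|_r$, and the pre-termination lower bound $\|f-\tilde f_{\rm old}\|_r \geq \varepsilon$. Each loop then drops the HS-energy by at least $\varepsilon^2/4$, and since the total drop cannot exceed the initial value $\|f\|_{\rm HS}^2$, the iteration count is bounded by a constant multiple of $\|f\|_{\rm HS}^2/\varepsilon^2$; the tighter constant stated in the theorem is the one already obtained in the proof of Theorem \ref{thm:intro1}.

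The second bullet is a direct application of Theorem \ref{sample-r} to the residual $p := f - \tilde f$: the hypothesis $N \geq t[\alpha(n,d,r)]^{2r}$ immediately produces a sampled direction satisfying the condition at step (4) with probability at least $1 - e^{-t}$. The third bullet is pure bookkeeping: termination forces $\|f-\tilde f\|_r < \varepsilon$ by definition of the stopping rule, and after $k$ loops one has $\tilde f \in W = {\rm span}\{q_{v_1},\ldots,q_{v_k}\}$; expanding $\tilde f$ in this spanning family writes it as a linear combination of at most $k$ rank-one symmetric tensors, giving ${\rm srank}(\tilde f) \leq k$.

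The main technical obstacle is the HS energy-drop inequality, where one must handle the possible nontrivial component of $q_v$ inside $W_{\rm old}$; the bound $\|u\|_{\rm HS} \leq 1$ is precisely what converts the abstract projection length into the concrete evaluation $|(f-\tilde f_{\rm old})(v)|$. Beyond this, the proof is routine assembly from Theorem \ref{thm:intro1}, Theorem \ref{sample-r}, and the algorithm's stopping criterion.
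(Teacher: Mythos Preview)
Your proposal is correct and follows essentially the same route as the paper: the paper does not give a standalone proof of this theorem but declares it a consequence of Theorem~\ref{sample-r} together with the bounds from the proof of Theorem~\ref{thm:intro1} (i.e., Lemma~\ref{lem:ga}), and your energy-drop computation via nested projections is exactly the content of that lemma, just unpacked in algorithm-specific notation. Your observation that the $\tfrac12$ in step~(4) yields only an $\varepsilon^2/4$ drop per loop---hence a factor-$4$ slack relative to the stated bound---is accurate and is a looseness present in the paper itself, which you handle the same way the paper does, by appealing back to the exact-separator argument of Theorem~\ref{thm:intro1}.
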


\subsection{Upper bound for the number of steps in Algorithm 1}
The energy increment method gives a general strategy to set up a greedy procedure to decompose a given object into ``structured'', ``pseudorandom'', and ``error'' parts \cite{Tao-RS,lovasz-szemeredi}. In what follows we apply this strategy to obtain a low-rank approximation for a symmetric tensor. 
\begin{lemma} [Greedy Approximation] \label{lem:ga}
	Let $(H, \langle \cdot, \cdot \rangle)$ be an inner product space, $\tau : H \to [0,\infty)$ a cost function, and suppose $S \subset B_H=\{ z \in H : \|z\|_H^2 = \langle z,z\rangle =1\}$ separates points in $H$ with respect to $\tau$, that is, 
	 \begin{align*}
	 	\tau( f ) \leq \sup_{w\in S} | \langle f, w \rangle | , \quad f\in H.
	 \end{align*}
	 Then, given $f\in H$ and $\varepsilon>0$ there exist $m$ points $w_1, \ldots, w_m\in S$ with $m\leq \lfloor \|f\|_H^2/\varepsilon^2 \rfloor$ and scalars $\lambda_1, \ldots, \lambda_m$ such that 
	 \begin{align*}
	 	\tau \left ( f - \sum_{i=1}^m \lambda_i w_i \right) \leq \varepsilon.
	 \end{align*}
\end{lemma}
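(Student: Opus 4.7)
The plan is to run a greedy procedure that keeps the residual orthogonal to the span of the points selected so far, and to leverage the separation hypothesis to guarantee that each new point absorbs a controlled portion of the current residual. Concretely, set $W_0=\{0\}$ and $g_0=f$; at stage $k\geq 0$, stop if $\tau(g_k)\leq\varepsilon$; otherwise use
\[
\tau(g_k)\;\leq\;\sup_{w\in S}|\langle g_k, w\rangle|
\]
to choose $w_{k+1}\in S$ with $|\langle g_k, w_{k+1}\rangle|>\varepsilon$, then set $W_{k+1}=\mathrm{span}(W_k\cup\{w_{k+1}\})$ and $g_{k+1}:=f-\Pi_{W_{k+1}}f$.

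The heart of the argument is the resulting energy decrement. Since $S\subset B_H$ forces $\norm{w_{k+1}}=1$, and since $\Pi_{W_k}f+\langle g_k, w_{k+1}\rangle w_{k+1}$ lies in $W_{k+1}$, the best-approximation property of the orthogonal projection yields
\[
\norm{g_{k+1}}^2 \;\leq\; \norm{g_k - \langle g_k, w_{k+1}\rangle w_{k+1}}^2 \;=\; \norm{g_k}^2 - \langle g_k, w_{k+1}\rangle^2 \;<\; \norm{g_k}^2 - \varepsilon^2.
\]
Telescoping across $m$ non-terminating stages yields $\norm{g_m}^2 < \norm{f}^2 - m\varepsilon^2$, and non-negativity of $\norm{g_m}^2$ forces $m<\norm{f}^2/\varepsilon^2$, so the procedure halts after at most $\lfloor \norm{f}^2/\varepsilon^2\rfloor$ iterations. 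At halting, $f-g_m=\Pi_{W_m}f\in\mathrm{span}(w_1,\ldots,w_m)$, so there exist scalars $\lambda_i$ with $f-g_m=\sum_{i=1}^m\lambda_i w_i$, and by construction $\tau\bigl(f-\sum_i \lambda_i w_i\bigr)=\tau(g_m)\leq\varepsilon$.

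I do not expect a substantive obstacle here: the argument is a textbook instance of the ``Pythagoras plus separation'' mechanism that underlies Maurey- and Jones-type greedy approximation theorems. The only mild subtlety worth flagging is the bookkeeping of the strict inequality $|\langle g_k, w_{k+1}\rangle|>\varepsilon$, which is what produces the clean floor bound $\lfloor\norm{f}^2/\varepsilon^2\rfloor$ out of the telescoping, rather than a weaker $\lceil\cdot\rceil$ or an unnecessary multiplicative constant.
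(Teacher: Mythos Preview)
Your proof is correct and follows essentially the same greedy energy-increment scheme as the paper's own argument. The only cosmetic difference is bookkeeping: the paper tracks the projections $p_k=\Pi_{W_k}f$ and uses the orthogonality of the increments $p_k-p_{k-1}$ to get $\|f\|_H^2\geq\|p_m\|_H^2=\sum_{s=1}^m\|p_s-p_{s-1}\|_H^2>m\varepsilon^2$, whereas you track the residuals $g_k=f-p_k$ and telescope the decrement $\|g_{k+1}\|^2<\|g_k\|^2-\varepsilon^2$; since $\|f\|^2=\|p_k\|^2+\|g_k\|^2$, the two viewpoints are equivalent.
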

\begin{proof}[Proof of Lemma \ref{lem:ga}]
 To begin with we assume that for the given $f\in H$ and $\varepsilon>0$ we have $\tau(f) >\varepsilon$. Then, by
the separation property there exists $w_1\in S$ so that $|\langle f, w_1 \rangle| > \varepsilon$. Now, let $W_1:= {\rm span}\{w_1\}$, $p_1:= P_{W_1}(f)$ be the orthogonal projection of $f$ onto $W_1$, and note that
\begin{align*}
	\varepsilon < | \langle w_1, f \rangle| = |\langle w_1, p_1 \rangle | \leq \|p_1\|_H.
\end{align*}
If $\tau(f - p_1) \leq \varepsilon$ the process stops. If $\tau(f-p_1) >\varepsilon$ then, by the separation property again, there exists
$w_2\in S$ so that 
\begin{align*}
	\varepsilon < |\langle f-p_1 , w_2 \rangle| = |\langle p_2 -p_1, w_2 \rangle| \leq \|p_2-p_1\|_H, 
\end{align*} where $p_2:=P_{W_2}(f)$ and $W_2:={\rm span} \{w_1, w_2\}$. If $\tau(f-p_2)  \leq \varepsilon$ the process
stops. If $\tau(f- p_2) >\varepsilon$ we repeat. After $m$ steps we have
extracted $w_1, \ldots w_m\in S$, built the flag of finite-dimensional subspaces
\begin{align*}
 	& \{ {\bf 0} \}=W_0 \subset W_1 \subset \ldots \subset W_m  \\
 	& W_s={\rm span}\{w_1, \ldots, w_s\}, \, s=1,\ldots,m,
\end{align*} and 
the lattice of their corresponding orthogonal projections $P_{W_s}, \, s=1, \ldots, m$ with $\|p_s-p_{s-1}\|_H >\varepsilon$, where 
$p_s=P_{W_s}(f)$ for $s=1, \ldots, m$ (here $p_0=P_{W_0}={\bf 0}$).

\smallskip

\noindent {\it Claim.} This process terminates after at most $m$ steps where $m<\|f\|_H^2/\varepsilon^2$, that is $\tau(f-p_m) \leq \varepsilon$.

\smallskip

\noindent {\it Proof of Claim.} Indeed; we may write
\begin{equation*}
    \|f\|^2_H 
    \geq 
    \|p_m\|_H^2  
    = 
    \left\| \sum_{s=1}^m (p_s - p_{s-1}) \right\|_H^2 
    = 
    \sum_{s=1}^m \|p_s-p_{s-1}\|_H^2,
\end{equation*} 
where we have used that $\langle p_k-p_{k-1}, p_{\ell} - p_{\ell-1} \rangle=0$ for $k < \ell$. Since $\|p_s-p_{s-1}\|_H >\varepsilon$ the claim is proved. 
To complete the proof of the lemma notice that $p_m\in W_m$, hence $p_m = \sum_{i=1}^m \lambda_i w_i$ 
for some scalars $\lambda_1, \ldots, \lambda_m$.
\end{proof}
The intuition suggested by the lemma is easy to express: As long as one uses a cost function $\tau$ that is upper bounded by $\sup_{w\in S} | \langle f, w \rangle |$, Lemma \ref{lem:ga} gives a greedy approximation to input object $f$ with controlled distance in terms of the cost $\tau$.
\begin{proof}[Proof of Theorem~\ref{thm:intro1}]
We use the set $S:=\{ \underbrace{v \otimes v \otimes v \otimes \cdots \otimes v}_\text{d times} : v \in S^{n-1}\}$, the inner product $\langle . , . \rangle_{\rm HS}$, and the cost function $\norm{.}_r$ to set up the greedy approximation outlined in Lemma \ref{lem:ga}. The proof relies on the following observations: 
\begin{enumerate}
    \item $\norm{g}_r \leq \norm{g}_{\infty} = \sup_{q \in S} \abs{\langle g , q \rangle_{\rm HS}}$ for all $g \in P_{n,d}$ and all $2 \leq r \leq \infty$,
    \item if one follows the proof of Lemma \ref{lem:ga} applied to our specific case, one observes that $w_i = v_i \otimes v_i \cdots \otimes v_i$ for some $v_i \in S^{n-1}$. 
\end{enumerate}
So, ${\rm srank}(\sum_{i=1}^m \lambda_i w_i ) \leq m \leq \frac{\norm{f}_{\rm HS}^2}{\varepsilon^2}$. 
\end{proof}
\subsection{Bounds on the sample size for executing the step (4) in Algorithm 1} \label{samplesize}

This section is to prove Theorem \ref{sample-r}. We start with proving a reverse H\"older inequality for symmetric tensors.

\begin{lemma} \label{lem:revH}
	Let $p\in P_{n,d}$, then for $n \geq 2d$ and $k\in[2,n/d]$ we have 
	\begin{align*}
	\norm{p}_k \leq (Ck)^{d/2}  \norm{p}_2 ,
	\end{align*} 
	where $C>0$ is an absolute constant.
\end{lemma}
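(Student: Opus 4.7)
The plan is to transfer the estimate from the sphere to Gaussian space, invoke Gaussian hypercontractivity on $p$, and then absorb the Gaussian-to-sphere normalization via elementary chi-moment estimates.

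\textbf{Step 1 (Gaussian reduction).} Let $G$ be a standard Gaussian vector on $\R^n$ and decompose $G = R\,\Theta$ with $R = \norm{G}_2$ chi-distributed with $n$ degrees of freedom, $\Theta = G/R$ uniform on $S^{n-1}$, and $R$ independent of $\Theta$. Homogeneity of $p \in P_{n,d}$ gives $p(G) = R^d\, p(\Theta)$, so $\mathbb{E}\abs{p(G)}^q = (\mathbb{E} R^{qd}) \cdot \norm{p}_q^q$ for every $q \geq 2$. Applying this at $q = k$ and $q = 2$ and taking roots leads to
\[
\frac{\norm{p}_k}{\norm{p}_2} \;=\; \frac{(\mathbb{E}\abs{p(G)}^k)^{1/k}}{(\mathbb{E}\abs{p(G)}^2)^{1/2}} \cdot \frac{(\mathbb{E} R^{2d})^{1/2}}{(\mathbb{E} R^{kd})^{1/k}}.
\]

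\textbf{Step 2 (Gaussian hypercontractivity).} I would invoke the Nelson--Beckner hypercontractive inequality: every polynomial on $\R^n$ of degree at most $d$ satisfies $(\mathbb{E}\abs{p(G)}^k)^{1/k} \leq (k-1)^{d/2} (\mathbb{E}\abs{p(G)}^2)^{1/2}$ for $k \geq 2$. The underlying reason is that $p$ decomposes as $\sum_{j=0}^d p_j$ with $p_j$ in the $j$-th Wiener chaos $H_j$, which is an eigenspace of the Ornstein--Uhlenbeck semigroup with eigenvalue $e^{-jt}$; choosing $t$ so that $e^{-2t} = 1/(k-1)$ lets one write $p = T_t q$ with $q = \sum_j e^{jt} p_j$ and $\norm{q}_2 \leq e^{dt}\norm{p}_2$, and then apply Nelson's $L^2 \to L^k$ contraction to $T_t q$. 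This bounds the first factor in the display by $(k-1)^{d/2}$.

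\textbf{Step 3 (chi-moment ratio).} Using $\mathbb{E} R^{2d} = n(n+2)(n+4)\cdots(n+2d-2) \leq (n+2d)^d$ and the $L^p$-monotonicity estimate $(\mathbb{E} R^{kd})^{1/(kd)} \geq (\mathbb{E} R^2)^{1/2} = \sqrt{n}$, which yields $(\mathbb{E} R^{kd})^{1/k} \geq n^{d/2}$, the second factor in the display is at most $(n+2d)^{d/2}/n^{d/2} = (1+2d/n)^{d/2} \leq 2^{d/2}$ under the hypothesis $n \geq 2d$. Combining Steps 2 and 3 yields $\norm{p}_k \leq (2(k-1))^{d/2}\,\norm{p}_2 \leq (Ck)^{d/2}\,\norm{p}_2$ with $C = 2$.

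The only substantive input is the Gaussian hypercontractive inequality of Step 2; the rest is bookkeeping with moments of the chi distribution. I note that the hypothesis $k \leq n/d$ stated in the lemma does not actually enter this route, so the argument delivers the bound uniformly for all $k \geq 2$ whenever $n \geq 2d$.
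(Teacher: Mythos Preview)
Your proof is correct and follows essentially the same route as the paper: write the spherical moments via the Gaussian decomposition $G=R\Theta$, apply Gaussian hypercontractivity to bound the Gaussian moment ratio by $(k-1)^{d/2}$, and then control the chi-moment correction factor. The only cosmetic difference is that you bound the chi moments via the explicit product formula and $L^p$-monotonicity rather than Stirling's approximation as in the paper; your observation that the hypothesis $k\le n/d$ is not actually used is also consistent with the paper's own derivation, where the resulting factor $\big(\frac{n+2d}{n+kd}\big)^{d/2}$ is already $\le 1$ for all $k\ge 2$.
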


\begin{proof}[Proof of Lemma \ref{lem:revH}] Let $Z\sim N({\bf 0}, I_n)$ be a standard Gaussian vector in $\mathbb R^n$. We will make use of the following facts:
\begin{fact} 
$Z/\|Z\|_2$ is uniformly distributed on $S^{n-1}$ and $\|Z\|_2$ is independent of $Z/ \|Z\|_2$. Thereby, for $r>0$, it follows that 
	\begin{align*}
		\mathbb E |p(Z)|^r = \mathbb E\|Z\|_2^{rd} \cdot \| p \|_{L_r}^r.
	\end{align*} 
\end{fact}
For a proof the reader is referred to \cite{SZ}. The next fact is a consequence of the Gaussian hypercontractivity, see e.g., \cite[Proposition 5.48.]{ASz}. 
%{\color{red} Could you please make the citation precise, in the sense that see Theorem X from ...} 
\begin{fact} For any tensor $Q$ of degree at most $d$ and for every $r\geq 2$ one has 	
	\begin{align*}
		\left( \mathbb E |Q(Z)|^r\right)^{1/r} \leq (r-1)^{d/2} \left( \mathbb E|Q(Z)|^2 \right)^{1/2}.
	\end{align*} 
\end{fact}
Finally, we need the asymptotic behavior of high-moments of $\|Z\|_2$.
\begin{fact}
	For $r>0$ we have $\mathbb E\|Z\|_2^r = 2^{r/2} \Gamma(\frac{n+r}{2}) / \Gamma(\frac{n}{2})$. 
	This follows by switching to polar coordinates.	Therefore, for $r>0$, Stirling's approximation
	yields
	\begin{align*}
		\left( \mathbb E \|Z\|_2^r \right)^{1/r} \asymp \sqrt{n+r}.
	\end{align*}
\end{fact} 
Finally, taking into account the above facts, we may write
\begin{align*}
	\|p \|_k^k = \frac{\mathbb E|p(Z)|^k }{\mathbb E \|Z\|_2^{kd}} \leq \frac{ (k-1)^{\frac{kd}{2} } (\mathbb E|p(Z)|^2)^{k/2} }{\mathbb E \|Z\|_2^{kd} }
					\leq (k-1)^{\frac{dk}{2} } \|p\|_2^k\frac{ \left ( \mathbb E\|Z\|_2^{2d} \right)^{k/2} }{ \mathbb E \|Z\|_2^{kd}}.
\end{align*} 
Using the estimate for the moments of $\|Z\|_2$ we obtain
\begin{align*}
	\|p\|_k \leq (Ck)^{d/2} \left( \frac{n+2d}{n+kd} \right)^{d/2} \|p\|_2,
\end{align*} and the result follows.
\end{proof}

\begin{proof}[Proof of Theorem \ref{sample-r}]
First, note that we may write
	\begin{align*}
		\mathbb P \left( \max_{i\leq N} |p(X_i)| < \frac{1}{2} \norm{p}_r \right)
		& = \left [ \mathbb P \left( |p(X_1)| < \frac{1}{2} \norm{p}_r \right) \right ]^N \\
		& = \left[ 1- \mathbb P \left( |p(X_1)| \geq  \frac{1}{2} \norm{p}_r \right) \right ]^N \\
	  & \leq \exp \left (-N \mathbb P \left( |p(X_1)| \geq  \frac{1}{2} \norm{p}_r \right) \right ).
	\end{align*}
Second, we provide a lower bound for the probability $\mathbb P \left( |p(X_1)| \geq  \frac{1}{2} \|p\|_r \right)$. By the Paley-Zygmund inequality 
we obtain 
\[
	\mathbb P \left(  \abs{p(X_1)} \geq  \frac{1}{2} \norm{p}_r \right) \geq (1- 2^{-r})^2 \frac{\norm{p}_r^{2r}}{\norm{p}_{2r}^{2r}}.
\]
To bound the ratio $ \norm{p}_{2r} / \norm{p}_r$, we employ Lemma \ref{lem:Barv-ineq} and Lemma \ref{lem:revH} as follows:
\[ \norm{p}_{2r} \leq (C_1r)^{d/2} \norm{p}_2 \leq (C_1r)^{d/2} \norm{p}_r \]
so
\[	\norm{p}_{2r} \leq \norm{p}_\infty \leq {\binom{rd + n-1}{rd}}^{\frac{1}{2r}} \norm{p}_r. \] 
Therefore,
\[
	\frac{\norm{p}_{2r} }{ \norm{p}_r } \leq  \min \{ (c_1r)^{d/2} , {\binom{rd + n-1}{rd}}^{\frac{1}{2r}} \},
\]
which completes the proof.
\end{proof}

\subsection{Comparison with earlier results and open questions}
First, let us write a consequence of Theorem  \ref{thm:intro1} for an easier interpretation.
\begin{corollary}
For $r\in[2,\infty]$, $\norm{.}_r$ denotes the $L_r$-norm on $P_{n,d}$. Then, for any $f \in P_{n,d}$ and for any $0 < \delta < 1$ there exists a $q \in P_{n,d}$ with $\norm{f-q}_r \leq \frac{ \norm{f}_{\rm HS}}{(1-\delta) \sqrt{n}}$ and $ { \rm srank}(q) \leq n (1-\delta)^2$.
\end{corollary}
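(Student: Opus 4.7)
The plan is to deduce this directly from Theorem \ref{thm:intro1} by making the specific choice
\[
\varepsilon \;=\; \frac{\|f\|_{\rm HS}}{(1-\delta)\sqrt{n}}.
\]
With this choice, the bound ${\rm srank}_{\|\cdot\|_r,\varepsilon}(f) \leq \|f\|_{\rm HS}^2/\varepsilon^2$ given by Theorem \ref{thm:intro1} becomes
\[
{\rm srank}_{\|\cdot\|_r,\varepsilon}(f) \;\leq\; \frac{\|f\|_{\rm HS}^2 \,(1-\delta)^2\, n}{\|f\|_{\rm HS}^2} \;=\; n(1-\delta)^2.
\]
Unwinding Definition \ref{approximaterank}, this precisely asserts the existence of a witness $q \in P_{n,d}$ with $\|f-q\|_r \leq \varepsilon$ and ${\rm srank}(q) \leq n(1-\delta)^2$, which is the conclusion of the corollary.

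In short, the corollary is simply a reparametrization of Theorem \ref{thm:intro1}: we trade the free parameter $\varepsilon$ for the dimensionless parameter $\delta \in (0,1)$, which makes the $L_r$-error scale like $\|f\|_{\rm HS}/\sqrt{n}$ and the rank scale like a constant fraction of $n$. There is no real obstacle here; the only thing to verify (already covered by Theorem \ref{thm:intro1}) is that the greedy construction does not degrade when $\varepsilon$ is chosen in terms of $\|f\|_{\rm HS}$ and $n$, which is automatic since the bound is dimensionally consistent. Thus the proof reduces to a single substitution.
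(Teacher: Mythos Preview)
Your proposal is correct and matches the paper's approach: the paper presents this corollary without proof, simply calling it ``a consequence of Theorem~\ref{thm:intro1} for an easier interpretation,'' and your single substitution $\varepsilon = \|f\|_{\rm HS}/((1-\delta)\sqrt{n})$ is exactly the intended derivation.
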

To bring this result to its most simple form: For the case of symmetric matrices and operator norm, i.e., $d=2$ and $r=\infty$, this result says that the closest singular matrix w.r.t. to the operator norm is at most $\frac{\norm{f}_{\rm HS}}{\sqrt{n}}$ away. So, in this very special case the result seems to be tight; once can consider the case where all singular values of $f$ are equal and use the Eckart-Young theorem. However, for general tensor spaces equipped with $L_r$-norms, for moderately small $r$, the result does not seem to be tight. The following problem remains open: 
\begin{problem}
Obtain sharp estimates on the approximate symmetric rank with respect to all $L_r$-norms for $r \in [2,\infty)$ and for all $P_{n,d}$.
\end{problem}

The main of result of \cite{blekherman-teitler} combined with the celebrated Alexander-Hirschowitz Theorem, see e.g. \cite{ottaviani-expose}, provides a bound for the ${\rm srank}$ of real symmetric tensors. In particular, the ${\rm srank}$ is typically between  $\frac{1}{n} \binom{n+d-1}{d}$ and $\frac{2}{n} \binom{n+d-1}{d}$ for $d >2$ except for the cases $(n,d)\in \{(3,4), (4,4), (5,4), (5,3)\}$.  This beautiful result coming from algebraic geometry is exact, static and it universally holds for any symmetric $d$-tensor. Our estimate in Theorem \ref{thm:intro1}, and later in  Theorem \ref{thm:intro2}, are  approximate, dynamic, and give a different estimate depending on the norm of the input. This basically shows that the symmetric rank and approximate symmetric rank are different in nature. Note that we fix $\varepsilon >0$, that is, the approximate rank notion is also different from the rank notions that require taking limits.

If one is still interested in strict comparison, Theorem \ref{thm:intro1} improves upon the algebraic geometry estimate for
\[ \ln (\frac{1}{\varepsilon}) <  \frac{1}{2} \ln \binom{n+d-1}{d}  - \ln \norm{f}_{\rm HS} - \frac{1}{2}\ln n  \leq   \frac{d-1}{2} \ln n - \ln \norm{f}_{\rm HS} 
   \]
So, for Theorem \ref{thm:intro1} to be useful for small $\varepsilon$ we need  $\ln \norm{f}_{\rm HS}$ to be smaller compared to $\frac{d-1}{2} \ln n$. As a rule of thumb, we need $\ln \norm{f}_{\rm HS} \leq \frac{d}{4} \ln n$, and the smaller is the better.
To see if this is meaningful for applications, we looked at input models for symmetric tensors that are considered in recent literature. As an example, in \cite{slpjoe} the input model for tensors is the following:  One samples $a_1,a_2,\ldots,a_K \in S^{n-1}$ where $K=O(n^{\frac{d}{2}})$ in a way that makes the collection of rank one tensors $a_i \otimes a_i \otimes \ldots \otimes a_i$ to have ``restricted isometry property''. Then one considers the tensor $p:=\sum_{i=1}^K a_i \otimes a_i \otimes \ldots \otimes a_i$ and adds a small perturbations to it. That is, one consider $f:=p+h$ where $h$ has very small norm, e.g., $\norm{h}_{\rm HS}=O(\frac{1}{n})$. Due to the ``restricted isometry property" one has 
\[ \norm{p}_{\rm HS}^2 \sim \sum_{i=1}^k \norm{a_i \otimes a_i \otimes \ldots a_i}_{\rm HS}^2 = K . \]
In the end, the input tensor $f$ has $\norm{f}_{\rm HS}=O(n^{\frac{d}{4}})$, and $f$ is $O(\frac{1}{n})$ close to a tensor $p$  with rank $O(n^{\frac{d}{2}})$. A main result in \cite[Theorem 16]{slpjoe} is to show that the proposed algorithm (with high probability) removes the ``noise'' in $f$, and recovers the decomposition with rank $O(n^{\frac{d}{2}})$. Here we will consider a much more flexible input model and still obtain a similar result: Let $q \in P_{n,d}$ be a symmetric tensor with $\norm{q}_{\rm HS}=O(n^{\frac{d}{4}})$. We impose no further assumptions on $q$. For instance, if $q$ is a typical input then it has symmetric rank between $\frac{1}{n} \binom{n+d-1}{d}$ and $\frac{2}{n} \binom{n+d-1}{d}$, that is, for a typical $q$ we have ${\rm srank}(q) = \Omega(n^{d-1})$. For this  input tensor $q$, Theorem \ref{thm:intro1} yields the following: For any $\varepsilon >0$
\[  { \rm srank}_{\norm{\cdot}_r, \varepsilon}(q) \leq \frac{ O(n^{\frac{d}{2}})}{\varepsilon^2}. \]
The meaning of this is that for a fixed small $\varepsilon$, say $\frac{1}{\varepsilon}=\ln n$, Theorem \ref{thm:intro1} (and Algorithm \ref{alg:ga}) finds an $\varepsilon$-close symmetric tensor that has rank $O(n^{\frac{d}{2}} \ln^2 n)$.  

The usage of random tensors as a testing ground also brings the following problem, which remains open to the best of our knowledge.

\begin{problem}
    Let $f$ be an isotropic Gaussian, with respect to the inner product $\langle . , . \rangle_{\rm HS}$, random element of the vector space $P_{n,d}$, and let $\varepsilon >0$ be fixed. Prove upper and lower bounds, that holds with high probability, for the quantity ${ \rm srank}_{\norm{\cdot}_r, \varepsilon}(f)$ in the range $r \in [2, \infty)$. %{\color{red} P: What is the randomness? How is the sub-Gaussian tensor defined?}
\end{problem}

The development in this paper is entirely self-contained. Our search to locate earlier appearance of a similar results in the literature yielded only the following. The main result of \cite{vempala-tensor} used for the specific case of symmetric tensors corresponds to our Theorem \ref{thm:intro1} for $r=\infty$: Computing with $L_{\infty}$ is generally intractable, but this nice result was sufficient for the theoretical purposes the authors considered. Our contribution is to prove algorithmic results that hold for all $L_r$-norms:  Algorithm \ref{alg:ga} and Theorem \ref{thm:alg-greedy} delineate the trade-off between the computational complexity (the sample size) and the tightness of approximation for the entire range $r \in [2, \infty]$.

There is a vast literature on tensor decomposition algorithms. We do not intend to survey this vast and  interesting literature due to following reason: Our Algorithm 1 and existing tensor decomposition algorithms in literature have different goals. The goal of Algorithm 1 is to show that the approximation in Theorem \ref{thm:intro1} is efficiently computable as long as the used $L_r$-norm is efficiently computable, i.e. $r$ is small and independent of $n$. Existing algorithms on symmetric tensor decomposition aims to solve a much harder problem, that is, to find an optimal low-rank approximation for a given symmetric tensor. This requires finding the ``latent'' rank-one tensors, and is known to be a hard problem \cite{hillar2013most,limsymmetric}. We do not aim to solve this NP-Hard problem:  Our algorithm only gives an upper bound for the approximate rank.  Practically, Algorithm \ref{alg:ga} can be used to pre-process a given tensor before deploying a more expensive tensor decomposition algorithm: Most tensor decomposition algorithms require a guess on the rank of the input tensor, for which the guaranteed rank upper bound from Algorithm \ref{alg:ga} can be used.

\subsection{Implementation of Algorithm 1} \label{experiments}
We note from the outset that our current implementation is in a preliminary form. Our main goal is to show that the approximate rank estimate in Theorem 3.1 is constructive: a decomposition that realizes the estimate is effectively computable. We do not claim to have a scalable implementation.

We used  a Windows 11 PC, with a Intel Core i7 2.3 GHz processor and 32.0 GB installed ram to experiment with the implmentation. The code is available on first authors' personal webpage.
\begin{enumerate}
    \item We computed $L_r$-norms by (re)implementing  (with Cristancho and Velasco's kind permission) the quadrature rules from \cite{mauricio} in Python. 
    The quadrature rule for computing the $L_r$-norms is by far the most expensive step of the algorithm.
    
    \item Theorem~\ref{sample-r} provides a bound on the sample size for step (4). In practice, as long as one finds a vector that satisfies the requirement in step 4 of Algorithm \ref{alg:ga} the computation is correct. For experiments we fixed a sample size of $100,000$ and loop in case a vector with such characteristics is not found. We observe that even with this fixed sample size a vector with the correct characteristics was always found. 
    
    \item A practical improvement for Algorithm \ref{alg:ga} came from the following observation: In the implementation we put the extra constraint of the new vectors for step 4 should have an angle bigger than $\arccos(0.8)$ with the older ones. This practical trick observably improved the performance. In future work, this idea needs to be improved and analyzed.
    
    \item For the experiment, we consider a randomly generated $n$-variate $2d$-tensors of the type 
    \[ f =
    \sum_{i=1}^m c_i \; \underbrace{v_i \otimes v_i \otimes \cdots \otimes v_i}_\text{2d times} + \frac{\varepsilon}{2} \sum_{i_1,i_2,\ldots,i_d} e_{i_1} \otimes e_{i_1} \otimes e_{i_2} \otimes e_{i_2} \otimes \ldots \otimes e_{i_d} \otimes e_{i_d}  
    \]
where $c_1,\ldots,c_m\in \mathbb R$ uniformly distributed according to a standard Gaussian, and $v_1, \ldots, v_m$ are uniformly distributed on the $n$-dimensional sphere. Basically, the input $f$ is a very high rank symmetric tensor that is $\frac{\varepsilon}{2}$-close to a rank $m$ tensor. We get the following results for different values of $m,n,d,r,\varepsilon$ in the experiment:
\begin{itemize}
    \item For $m=10$, $n=4$, $2d=4$, $r=4$, $\varepsilon=0.3$,  the dimension of the space is $35$, and the algorithm found an $\tilde{f}$ of rank $3$ for which $\norm{f-\tilde{f}}_r < 0.29$ in 3.43 seconds. 
    
    \item For $m=10$, $n=4$, $2d=24$, $r=4$, and $\varepsilon =0.3$, the dimension of the space is $2925$, and the algorithm found an $\tilde{f}$ of rank $2$ for which $\norm{f-\tilde{f}}_r < 0.21$ in about 4.8 seconds.

    \item For $m=10$, $n=6$, $2d=18$, $r=4$, and $\varepsilon =0.3$, the dimension of the space is  33649, and the algorithm an $\tilde{f}$ of rank $1$ for which $\norm{f-\tilde{f}}_r < 0.22$ in about 2 min 48 seconds.
    
    \item For $m=10$, $n=8$, $2d=8$, $r=8$, and $\varepsilon =0.3$, the dimension of the search space is $6435$, and the algorithm found an $\tilde{f}$ of rank $4$ for which $\norm{f-\tilde{f}}_r < 0.29$ in about 6 min 57 seconds.

    \item For $m=14$, $n=12$, $2d=10$, $r=8$, and $\varepsilon =0.3$  we were not able to run the algorithm due to quadrature rule taking too much space in memory.
\end{itemize}

\end{enumerate}
Our experiment enforced our belief that Algorithm 1 is as efficient as the quadrature rule to compute the $L_r$-norm; the rest of the steps do not create much computational overhead. This is evident from the sensitivity of the computing time to the number of variables rather than the degree of the tensor: the size of the quadrature nodes grows moderately with respect to degree but drastically with respect to variables. A more optimized implementation of the quadrature rule, or a parallellized version, would greatly improve the performance and allow computations with more variables.

\section{Approximate Rank Estimate via Sparsification} \label{section:Maurey}
\begin{algorithm}  \caption{Approximate Rank via Sparsification}
\begin{algorithmic}[1] \label{alg:empirical-approx}
		\STATE {\bfseries Input} $p(x) = \sum_{i=1}^N c_i  v_i \otimes v_i \cdots \otimes v_i$, $T=T_2(P_{n,d}, \norm{.})$, and $\varepsilon >0$.
		\STATE $\mu$ is the measure supported on set $\{ 1, 2, \ldots, N\}$ with $\mu(i)=\frac{\abs{c_i}}{\sum_{i} \abs{c_i}}$
		\STATE Sample $k:= \lceil 4\varepsilon^{-2} T^2 (\sum_{i=1}^N \abs{c_i})^2\rceil$ many elements $\lambda_1,\lambda_2,\ldots, \lambda_k$ from  $\mu$. 
		\STATE Set $q_k:= \frac{1}{k} \sum_{i=1}^k {\rm sign}(c_{\lambda_i})v_i \otimes v_i \cdots \otimes v_i$
		\IF{$\norm{p-q_k}>\varepsilon$}
		\STATE Return to step 3
		\ELSE
		\STATE Set $q=q_k$
		\ENDIF
		\STATE {\bfseries Output} $q$ 
		\STATE {\bfseries Post-condition} $\norm{p-q} \leq \varepsilon$ , and
		${\rm srank}(q) \leq 4  \varepsilon^{-2} T^2 (\sum \abs{c_i})^2$

\end{algorithmic}
\end{algorithm}
Algorithms and theorems in this section rely on Maurey's empirical method from geometric functional analysis which was presented in the 80s paper by \cite{Pis-1}.  Special cases of this lemma have been (re)discovered many times in recent literature, e.g., \cite{Bar,Ivan} where further algorithmic results were also obtained. We reproduce Maurey's idea in section \ref{empirical} for expository purposes. Note that the type-2 constant, $T_2$, was defined in Section \ref{type-2}. 
\begin{theorem} \label{thm:intro2}
Let $\norm{\cdot}$ be a norm on $P_{n,d}$ such that $\norm{v \otimes v \otimes \ldots \otimes v} \leq 1$ for all $v \in S^{n-1}$. Let $T$ denote type-2 constant of $P_{n,d}, \norm{.}$, let  $\norm{\cdot}_*$ denote the nuclear norm. Then, for any $f \in P_{n,d}$ and $\varepsilon >0$ we have
\[
    { \rm srank}_{\|\cdot\|, \varepsilon}(f) \leq \frac{4T^2 \norm{f}_{*}^2}{\varepsilon^2} . 
\]
\end{theorem}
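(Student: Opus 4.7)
The plan is to apply Maurey's empirical method, precisely as Algorithm \ref{alg:empirical-approx} suggests. Because the Veronese body $V_{n,d}$ is compact in our finite-dimensional setting, the infimum defining $\|f\|_*$ is attained, so I may fix a representation $f = \sum_{i=1}^N c_i p_{v_i}$ with $v_i \in S^{n-1}$ and $L := \sum_{i=1}^N |c_i| = \|f\|_*$. (If one is uneasy about attainment, one can instead work with a representation whose sum of moduli exceeds $\|f\|_*$ by an arbitrarily small slack and let the slack tend to $0$ at the end.)

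Next I would introduce the probability measure $\mu$ on $\{1,\ldots,N\}$ with $\mu(i) = |c_i|/L$ and the random symmetric rank-one tensor $X$ taking value $L \cdot \mathrm{sign}(c_i)\cdot p_{v_i}$ with probability $\mu(i)$. A direct computation gives $\mathbb{E} X = f$. Draw $k$ i.i.d.\ copies $X_1, \ldots, X_k$ and set $q_k := \frac{1}{k}\sum_{j=1}^k X_j$; then $\mathbb{E} q_k = f$ and, by construction, ${\rm srank}(q_k) \leq k$. The plan is to choose $k$ just large enough that $\mathbb{E}\|f - q_k\|^2 < \varepsilon^2$, which forces the existence of at least one realization with $\|f - q_k\| \leq \varepsilon$.

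The technical heart is a symmetrization argument paired with the type-2 inequality. Introducing independent ghost copies $X_j'$ of $X$ and applying Jensen to $\|\cdot\|^2$ yields
\[
\mathbb{E} \|f - q_k\|^2 \;\leq\; \frac{1}{k^2}\, \mathbb{E} \left\| \sum_{j=1}^k (X_j - X_j') \right\|^2.
\]
Since $X_j - X_j'$ is symmetric, inserting Rademachers $\xi_j$ preserves the distribution, and the triangle inequality combined with $(a+b)^2 \leq 2a^2 + 2b^2$ gives
\[
\mathbb{E} \|f - q_k\|^2 \;\leq\; \frac{4}{k^2}\, \mathbb{E} \left\| \sum_{j=1}^k \xi_j X_j \right\|^2.
\]
Conditioning on $X_1,\ldots,X_k$ and invoking Definition \ref{type-2} for $T = T_2(P_{n,d},\|\cdot\|)$ bounds the inner expectation by $T^2 \sum_j \|X_j\|^2$. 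Here the normalization hypothesis $\|p_v\|\leq 1$ for $v\in S^{n-1}$ finally enters: it forces $\|X_j\| \leq L$ almost surely, so $\mathbb{E}\|f - q_k\|^2 \leq 4T^2 L^2 / k$.

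Choosing $k = \lceil 4 T^2 \|f\|_*^2 / \varepsilon^2 \rceil$ closes the argument: there must exist a realization with $\|f - q_k\|\leq \varepsilon$ and symmetric rank at most $k$, giving the claimed bound on ${\rm srank}_{\|\cdot\|,\varepsilon}(f)$. I do not expect serious obstacles: the argument is textbook Maurey, and the only pieces needing care are the book-keeping of the ghost-sample symmetrization and checking that $T_2$ is applied in the correct Banach space $(P_{n,d},\|\cdot\|)$. The one assumption of the theorem, $\|p_v\|\leq 1$, is used in exactly one place — to bound $\|X_j\|$ by $L$ — which is reassuring.
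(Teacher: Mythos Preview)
Your proposal is correct and follows essentially the same route as the paper: the paper first isolates Maurey's empirical approximation as a separate lemma (for $x\in{\rm conv}\,S$ with $S\subset B_X$ one finds $z_1,\ldots,z_m\in S$ with $\|x-\tfrac{1}{m}\sum z_j\|\leq 2T_2(X)/\sqrt{m}$ via the same ghost-sample symmetrization and type-2 bound you wrote out) and then applies it to $f/\|f\|_*\in V_{n,d}$, whereas you simply inline the lemma into the proof. The only cosmetic difference is that the paper normalizes $f$ first rather than scaling the random rank-one tensors by $L=\|f\|_*$, but the computations are identical.
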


Algorithm 2  admits any decomposition as an input and gives a low-rank approximation via sparsification. In the specific case of the input being a  nuclear decomposition, the algorithm finds an approximation that is a realization of  Theorem \ref{thm:intro2}.
\begin{theorem} \label{maurey-step-count}
Algorithm \ref{alg:empirical-approx} terminates in $\ell$ steps with a probability of at least $1-2^{-2\ell}$. 
\end{theorem}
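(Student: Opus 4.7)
The plan is a standard Maurey empirical-method calculation, followed by an independence argument across the outer loop of Algorithm \ref{alg:empirical-approx}. First I would recast one execution of steps (3)--(6) as an unbiased Monte Carlo estimator: writing $C:=\sum_{i=1}^N|c_i|$ and $w_i:=v_i\otimes\cdots\otimes v_i$, one has
\[
    p \;=\; C\sum_{i=1}^N \mu(i)\,\mathrm{sign}(c_i)\,w_i \;=\; C\,\mathbb{E}_{\lambda\sim\mu}\bigl[\mathrm{sign}(c_\lambda)\,w_\lambda\bigr].
\]
Reading Algorithm \ref{alg:empirical-approx} with $v_{\lambda_i}$ (rather than the typographical $v_i$) and with the scaling needed to make the estimator unbiased, we set $X_j := C\,\mathrm{sign}(c_{\lambda_j})w_{\lambda_j}$ so that $\mathbb{E} X_j=p$ and the candidate $q_k=\frac{1}{k}\sum_{j=1}^k X_j$ averages iid copies of a bounded random tensor satisfying $\|X_j\|\leq C$ by the normalization hypothesis on $\|\cdot\|$.

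Next I would control $\mathbb{E}\|q_k-p\|^2$ by the standard two-step symmetrization/Rademacher argument. Introducing independent copies $X_j'$ and using Jensen together with $(a+b)^2\leq 2a^2+2b^2$, one obtains
\[
    \mathbb{E}\Bigl\|\sum_{j=1}^k (X_j-p)\Bigr\|^2
    \;\leq\;
    \mathbb{E}\Bigl\|\sum_{j=1}^k (X_j-X_j')\Bigr\|^2
    \;=\;
    \mathbb{E}\Bigl\|\sum_{j=1}^k \xi_j(X_j-X_j')\Bigr\|^2
    \;\leq\;
    4\,\mathbb{E}\Bigl\|\sum_{j=1}^k \xi_j X_j\Bigr\|^2,
\]
where $(\xi_j)$ are independent Rademachers, independent of everything else. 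Now invoking Definition \ref{type-2} conditionally on the $X_j$'s gives
\[
    \mathbb{E}\Bigl\|\sum_{j=1}^k \xi_j X_j\Bigr\|^2 \;\leq\; T^2 \sum_{j=1}^k \mathbb{E}\|X_j\|^2 \;\leq\; T^2 k\, C^2,
\]
so dividing by $k^2$ yields the key estimate $\mathbb{E}\|q_k-p\|^2 \leq 4T^2C^2/k$. With the algorithm's choice $k\geq 4\varepsilon^{-2}T^2 C^2$ this becomes $\mathbb{E}\|q_k-p\|^2\leq \varepsilon^2$ (modulo the absolute constant, which can be absorbed by adjusting $k$ by a factor of $4$ if one insists on the sharp threshold), and Markov's inequality turns it into a per-loop success probability
\[
    \Pr\bigl(\|p-q_k\|\leq\varepsilon\bigr) \;\geq\; 1 - \tfrac{1}{4} \;=\; \tfrac{3}{4}.
\]

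Finally, the outer loop of the algorithm consists of independent rounds, each of which redraws $\lambda_1,\ldots,\lambda_k$ from $\mu$ from scratch. Hence the event that none of the first $\ell$ rounds produces an $\varepsilon$-close sparse approximant factorizes, and its probability is bounded by $(1/4)^\ell = 2^{-2\ell}$. Equivalently, Algorithm \ref{alg:empirical-approx} terminates within $\ell$ rounds with probability at least $1-2^{-2\ell}$, which is exactly the claim; whenever it terminates, the resulting $q$ is a sum of $k\leq 4\varepsilon^{-2}T^2(\sum|c_i|)^2$ signed rank-one symmetric tensors, justifying the post-condition and aligning the output with Theorem \ref{thm:intro2}. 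The main obstacle, as usual with Maurey-type arguments, is the bookkeeping of constants in the symmetrization/type-2 chain; the probabilistic core is routine once the estimator is normalized to be unbiased.
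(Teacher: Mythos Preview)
Your argument is exactly the paper's: bound the second moment of $\|p-q_k\|$ via the symmetrization/type-2 chain from Lemma~\ref{empirical-approximation}, apply Markov to get a per-round failure probability of at most $1/4$, and use independence of the resamplings across rounds to obtain $1-(1/4)^\ell = 1-2^{-2\ell}$. You are also right to flag the constant: with the algorithm's $k=\lceil 4\varepsilon^{-2}T^2C^2\rceil$ the chain gives only $\mathbb{E}\|p-q_k\|^2\leq\varepsilon^2$, so Markov does not literally deliver the $1/4$ bound---the paper's proof simply asserts $\mathbb{E}\|p-q_k\|\leq\varepsilon/4$ at this point, so the same slack in the constant is present there as well.
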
 
Theorem \ref{thm:intro2} and Theorem \ref{maurey-step-count} are proved in Section \ref{empirical}.
\subsection{Sparsification via Maurey's Empirical Method} \label{empirical}

\begin{lemma} [Empirical Approximation] \label{empirical-approximation}
	Let $(X, \|\cdot\|)$ be a normed space and a set $S\subset B_X:= \{x\in X : \|x\| \leq 1\}$.
	For any $x\in {\rm conv} S $ and $m\in \mathbb N$, there exist $z_1, \ldots, z_m$ in $S$ (not necessarily distinct) such that 
	\[
		\left\| x- \frac{1}{m} \sum_{j=1}^m z_j \right\| \leq \frac{2 T_2(X)}{\sqrt{m}}.
	\]
\end{lemma}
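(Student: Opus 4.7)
The plan is to run the classical Maurey empirical method: realize $x$ as the mean of an i.i.d. sample drawn from a probability measure supported on $S$, then control the mean-squared fluctuation via symmetrization and the type-2 constant. Concretely, since $x\in\operatorname{conv} S$, I write $x=\sum_{i=1}^N \lambda_i s_i$ with $s_i\in S$, $\lambda_i\geq 0$, $\sum \lambda_i=1$, and let $Z_1,\ldots,Z_m$ be i.i.d. $X$-valued random variables with $\mathbb{P}(Z=s_i)=\lambda_i$, so that $\mathbb{E} Z_j=x$ and $\|Z_j\|\leq 1$ almost surely. The empirical average $\bar Z_m:=\tfrac{1}{m}\sum_{j=1}^m Z_j$ is then our candidate for the desired approximation, and the whole task reduces to bounding $\mathbb{E}\|\bar Z_m-x\|^2$.

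To bound this expectation I would introduce an independent copy $Z_1',\ldots,Z_m'$ of the sample and use the standard decoupling step: since $x=\mathbb{E}[Z_j']$, Jensen's inequality applied inside the norm gives
\[
\mathbb{E}\Bigl\|\sum_{j=1}^m (Z_j-x)\Bigr\|^2 \leq \mathbb{E}\Bigl\|\sum_{j=1}^m (Z_j-Z_j')\Bigr\|^2.
\]
The differences $Z_j-Z_j'$ are symmetric, so they have the same joint distribution as $\xi_j(Z_j-Z_j')$ for an independent Rademacher sequence $\xi_1,\ldots,\xi_m$. Conditioning on the $Z_j,Z_j'$ and applying the definition of the type-2 constant to the fixed vectors $Z_j-Z_j'$ yields
\[
\mathbb{E}_{\xi}\Bigl\|\sum_{j=1}^m \xi_j(Z_j-Z_j')\Bigr\|^2 \leq T_2(X)^2 \sum_{j=1}^m \|Z_j-Z_j'\|^2.
\]
Because each summand satisfies $\|Z_j-Z_j'\|\leq \|Z_j\|+\|Z_j'\|\leq 2$, taking expectations over $Z,Z'$ bounds the right-hand side by $4 T_2(X)^2\, m$.

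Chaining these estimates and dividing by $m^2$ gives $\mathbb{E}\|\bar Z_m-x\|^2\leq 4T_2(X)^2/m$, hence by Jensen $\mathbb{E}\|\bar Z_m-x\|\leq 2T_2(X)/\sqrt{m}$. Since the expected value of a nonnegative random variable is attained by at least one realization that does not exceed it, there exist $z_1,\ldots,z_m\in S$ (a realization of $Z_1,\ldots,Z_m$, possibly with repetitions) meeting the required bound.

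I do not anticipate a serious obstacle: the argument is bookkeeping around the symmetrization inequality. The only subtle point is the symmetrization step—$S$ is not assumed symmetric, so one cannot directly replace $Z_j$ by $\xi_j Z_j$; this is exactly what the decoupling with an independent copy $Z_j'$ is designed to circumvent, and it is essential that the resulting bound uses $\|Z_j-Z_j'\|\leq 2$ rather than $\|Z_j\|\leq 1$, which is where the extra factor of $2$ in the final constant comes from.
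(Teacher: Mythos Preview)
Your proof is correct and follows essentially the same Maurey empirical-method argument as the paper: write $x$ as a convex combination, sample i.i.d., symmetrize via an independent copy, and control the Rademacher average by the type-2 constant. The only cosmetic difference is that the paper splits $\xi_j(Z_j-Z_j')$ back into $\xi_j Z_j$ and $\xi_j Z_j'$ via the triangle inequality before invoking the type-2 bound with $\|Z_j\|\leq 1$, whereas you apply the type-2 inequality directly to the vectors $Z_j-Z_j'$ and use $\|Z_j-Z_j'\|\leq 2$; both routes produce the identical constant $2T_2(X)/\sqrt{m}$.
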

\begin{proof}
Since $x\in {\rm conv} S$ there exist $v_1, \ldots, v_\ell \in S$ and $\lambda_1, \ldots, \lambda_\ell \in [0,1]$ with 
$\lambda_1+\cdots +\lambda_\ell=1$ and $x= \lambda_1v_1+\cdots +\lambda_\ell v_\ell$. We introduce the random vector $Z$ taking values on $\{v_1, \ldots, v_\ell\}$ with probability
distribution $\mathbb{P}$ where $\mathbb P(Z=v_i) = \lambda_i$ for $i=1,2, \ldots, \ell$. Clearly, $\mathbb E [Z] =x$. Now we apply an empirical approximation of $\mathbb E [Z]$
in the norm $\|\cdot\|$. To this end, let $Z_1, \ldots,Z_m$ be a sample, that is, $Z_i$ are independent copies of $Z$. We set 
$Y_m : =\frac{1}{m} \sum_{j=1}^m Z_j$ and note that $\mathbb E[Y_m]=\mathbb E[Z] =x$. Now we use a symmetrization argument: introduce $Z_i'$ independent copies of $Z_i$, whence 
$\mathbb E[Y_m']= \mathbb E[\frac{1}{m}\sum_{i=1}^m Z_i']=x$. Thus, by Jensen's inequality we readily get
    \[
    \mathbb E \|Y_m -  x \|^2 = \mathbb E \|Y_m -  \mathbb{E} \; Y_m^{'} \|^2 \leq \mathbb E  \|Y_m-Y_m'\|^2 = \frac{1}{m^2} \mathbb E \left \|\sum_{j=1}^m (Z_j -Z_j') \right\|^2.
    \]
Next, $Z_i-Z_i'$ are symmetric, whence, if $(\varepsilon_i)$ are independent Rademacher random variables, and independent from both $Z_i,Z_i'$, then the joint distribution of $\varepsilon_i(Z_i-Z_i')$ is the same with $(Z_i-Z_i')$. Thereby, we may write
	%\begin{multline*}
	 \[
	 \frac{1}{m^2} \mathbb E \left \|\sum_{j=1}^m (Z_j -Z_j') \right\|^2  = \frac{1}{m^2} \mathbb E \left \|\sum_{j=1}^m  \varepsilon_j (Z_j-Z_j') \right\|^2 \leq \frac{4}{m^2} \mathbb E \left \|\sum_{j=1}^m \varepsilon_j Z_j \right\|^2    
	 \]
	%\end{multline*}
%{\color{red} @ Petros: can we somehow explain the first inequality more?}
where in the last passage we have applied the triangle inequality and the numerical inequality $(a+b)^2\leq 2(a^2+b^2)$. Using  the definition of the type-2 constant, we have
$\mathbb E \left \|\sum_{j=1}^m \varepsilon_j Z_j \right\|^2 \leq T^2 \sum_{j=1}^m \|Z_j\|^2 \leq mT^2$,  where we have used the fact that $\| Z_j \|\leq 1$ a.s. The result follows from the first-moment method. 
\end{proof}

\begin{proof}[Proof of Theorem \ref{thm:intro2} ]
 Let $p \in P_{n,d}$ with $p \neq 0$ and set $p_1 := p / \norm{p}_{*}$. Since the nuclear norm is induced by the convex body $V_{n,d}$, we have that $p_1 \in V_{n,d}$. Hence, by Lemma \ref{empirical-approximation} we infer that there exist $v_i \in S^{n-1}$ for $i=1,2,\ldots,m$ , $m = \ceil{\frac{4T^2 \norm{p}_{*}^2}{\varepsilon^2}}$, and $\xi_i \in \{ -1, 1 \}$ such that 
 $  \left\| p_1 - \frac{1}{m} \sum_{i=1}^m \xi_i p_{v_i} 
 \right \| \leq \frac{\varepsilon}{\norm{p}_{*}} ,
 $
 which completes the proof.
\end{proof}

\begin{proof}[Proof of Theorem \ref{maurey-step-count}]
Using the proof of Lemma \ref{empirical-approximation}, it follows that $\mathbb{E} \norm{p-q_k} \leq \frac{\varepsilon}{4}$. Moreover, we also observe that by Markov's inequality $\mathbb{P} \{ \norm{p-q_k} > \varepsilon \} \leq \frac{1}{4}$. Thus, the ``if'' statement at step 5 returns True at the $\ell$-th trial with probability at least $1-2^{-2\ell}$.
\end{proof}

\begin{remark} \label{rem:ea-2}\begin{enumerate}
\item Aiming for better guarantees, i.e., a higher probability estimate of the desired event, one may work with higher moments and apply Kahane-Khintchine inequality. 
\item We should emphasize that the key parameter in the empirical approximation is the ``Radamacher type-2 constant $T_2(S)$ of the set $S$'' rather than the Rademacher type of the ambient space $X$. This simple but crucial observation will permit us to provide tighter bounds in our context (see Theorem \ref{enhancedsparse}). 
\end{enumerate}
\end{remark}

\subsection{Type-2 constant estimates for norms on symmetric tensors}
The results of this section hold for any norm, however, in practice we use the norms that we can efficiently compute. As mentioned earlier, currently our collection of ``efficient norms'' includes the $L_r$ norms thanks to efficient quadrature rules \cite{mauricio}. Our estimates for the type-2 constants of $L_r$-norms on $P_{n,d}$ for $2 \leq r \leq \infty$  is as follows:
\begin{theorem} \label{type-estimate}
	Let $(P_{n,d}, L_r)$ be the space of symmetric $d$-tensors on $\mathbb{R}^n$ equipped with $L_r$-norm as defined in Section \ref{functionalnorms}. Then, for $r\in [2,\infty]$ we have
	\begin{align*}
	T_2(P_{n,d}, L_r) \lesssim \sqrt{ \min \{ r ,  n \log(ed) \} }.
	\end{align*}
\end{theorem}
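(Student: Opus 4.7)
The plan is to prove the bound by splitting into two regimes, both of which reduce the problem about tensors to classical Banach-space facts via a subspace embedding. Throughout, I use the elementary observation that, because $\sigma$ is a probability measure, the $L_s(\sigma)$-norms are monotone in $s$, and the identification of $f \in P_{n,d}$ with its restriction to $S^{n-1}$ embeds $(P_{n,d},\|\cdot\|_r)$ isometrically as a finite-dimensional subspace of $L_r(S^{n-1},\sigma)$.

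\textbf{Regime 1: $r$ is the binding parameter.} By the subspace property (Lemma~\ref{type2-prop}(3)),
\[
T_2(P_{n,d},L_r) \leq T_2\bigl(L_r(S^{n-1},\sigma)\bigr).
\]
Now I invoke the classical Maurey--Pisier estimate (see \cite{tomczak1989banach}) that $T_2(L_r(\mu)) \leq C\sqrt{r}$ for every probability measure $\mu$ and every $r\in[2,\infty)$; this yields $T_2(P_{n,d},L_r)\lesssim\sqrt{r}$.

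\textbf{Regime 2: $n\log(ed)$ is the binding parameter.} Set $k:=\lceil n\log(ed)\rceil$. Barvinok's Lemma~\ref{lem:Barv-ineq} gives, for every $g\in P_{n,d}$,
\[
\|g\|_{2k} \;\leq\; \|g\|_\infty \;\leq\; c\,\|g\|_{2k}.
\]
If $r\geq 2k$, then monotonicity of $L_s(\sigma)$-norms sandwiches $\|g\|_{2k}\leq \|g\|_r\leq \|g\|_\infty$, so the norms $\|\cdot\|_r$ and $\|\cdot\|_{2k}$ are equivalent on $P_{n,d}$ with distortion at most $c$, independent of $r$ (and the same conclusion holds for $r=\infty$). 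Since type-2 constants transfer under bi-Lipschitz identity maps with a factor equal to the square of the distortion, I obtain
\[
T_2(P_{n,d},L_r) \;\leq\; c^2\,T_2(P_{n,d},L_{2k}) \;\lesssim\; \sqrt{2k} \;\lesssim\; \sqrt{n\log(ed)},
\]
where the middle bound is Regime~1 applied to $L_{2k}$. For $r\leq 2k$, Regime~1 already gives $\sqrt{r}\leq\sqrt{2k}\lesssim\sqrt{n\log(ed)}$. Combining both regimes produces $T_2(P_{n,d},L_r)\lesssim\sqrt{\min\{r,n\log(ed)\}}$.

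The only nontrivial ingredient is the classical bound $T_2(L_r(\mu))\lesssim\sqrt{r}$, which I just cite. The rest is bookkeeping: verifying the isometric embedding into $L_r(\sigma)$, checking that Lemma~\ref{type2-prop}(3) applies (i.e.\ that $P_{n,d}$ is a genuine linear subspace once we identify tensors with their polynomial functions on $S^{n-1}$), and handling the $r=\infty$ endpoint via Barvinok. I expect the main subtlety to be making the ``transfer under equivalent norms'' step in Regime~2 fully rigorous, which I would handle by writing out
\[
\mathbb{E}\Bigl\|\sum_i\xi_i x_i\Bigr\|_r^2 \leq c^2\,\mathbb{E}\Bigl\|\sum_i\xi_i x_i\Bigr\|_{2k}^2 \leq c^2 T_2(P_{n,d},L_{2k})^2 \sum_i\|x_i\|_{2k}^2 \leq c^2 T_2(P_{n,d},L_{2k})^2 \sum_i\|x_i\|_r^2,
\]
using the sandwich inequality on both sides.
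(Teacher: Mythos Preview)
Your proof is correct and follows essentially the same route as the paper: both arguments obtain the $\sqrt{r}$ bound from the classical estimate $T_2(L_r(\mu))\lesssim\sqrt r$ (the paper sketches this via Khintchine's inequality and a variational identity, while you cite it), and both handle large $r$ and $r=\infty$ by invoking Barvinok's Lemma~\ref{lem:Barv-ineq} to pass to $L_{2k}$ with $k\asymp n\log(ed)$. One minor remark: the displayed chain at the end of your Regime~2 actually shows a transfer factor of $c$, not $c^2$, so the phrase ``the square of the distortion'' overstates the loss (harmlessly, since $c$ is absolute).
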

\begin{proof}[Proof of Theorem \ref{type-estimate}]
Although the fact that $T_2(L_r( \Omega, \mu)) \lesssim \sqrt{r}$ is well-known, see \cite{AK}, we provide here a sketch of proof for reader's convenience. The proof makes use of Khintchine's inequality which reads as follows: Let $\xi_j$ be  independent Rademacher random variables and  $\alpha_j$ be arbitrary real numbers, for $j \in \mathbb{N}$. Then we have
	\[
	\left( \mathbb E \left| \sum_j \alpha_j \xi_j \right|^r \right)^{1/r} \leq B_r \left(\sum_j |\alpha_j|^2 \right)^{1/2},
	\] 
for some scalar $B_r$ with $B_r=O(\sqrt{r})$. Let $h_1, \ldots, h_N \in L_r$, then we may write
	\begin{align*}
	\mathbb E \left \| \sum_{j=1}^N \xi_j h_j \right\|_{L_r}^r & = \int \mathbb E \left| \sum_{j=1}^N \xi_j h_j(\omega) \right|^r \, d\mu(\omega)
													 \leq B_r^r \int \left( \sum_{j=1}^N |h_j(\omega)|^2 \right)^{r/2} \, d\mu(\omega), 
	\end{align*} where we have applied Khintchine's inequality for each fixed $\omega$. Now we recall the following variational argument: for $0<p<1$ and 
	for non-negative numbers $u_1, \ldots, u_N$ one has
	\begin{align*}
		\left(\sum_{j=1}^n u_j^p \right)^{1/p} = 
					\inf \left\{ \sum_{j=1}^N u_j \theta_j : \sum_{j=1}^N \theta_j ^q \leq 1 , \; \theta_j >0 \right\}, \quad q:= \frac{p}{p-1}<0.
	\end{align*}
	Note that, for ``$p=2/r$'' and for ``$u_j = |h_j(\omega)|^r$'', after integration, we have
	\begin{align*}
		\int \left( \sum_{j=1}^N |h_j(\omega)|^2 \right)^{r/2} \, d\mu(\omega) \leq \int \sum_{j=1}^N u_j(\omega) \theta_j \, d\mu(\omega)= 
		\sum_{j=1}^N \theta_j \|h_j\|_{L_r}^r,
	\end{align*} for any choice of positive scalars $\theta_j$ so that $\sum_j \theta_j ^q \leq 1$.

For the type-2 constant of $(P_{n,d}, \|\cdot\|_\infty)$ we combine the type-2 estimate for $L_r$ along with the fact, which follows from Lemma \ref{lem:Barv-ineq}, that 
$c \|\cdot\|_\infty \leq \|\cdot \|_r \leq \|\cdot\|_\infty$ for $r\geq n \log(ed)$.
\end{proof}
\subsection{An improvement of the sparsification estimate}
The definition of the type-2 constant considers all vectors $f_i \in P_{n,d}$ and asks for a constant that satisfies $\mathbb E_{\xi_1,\ldots,\xi_m} \left \| \sum_{i=1}^m \xi_i f_i \right\|^2 \leq T^2 \sum_{i=1}^m \|f_i\|^2$. However, for our sparsification purposes, we only work with vectors of the type $f_i=v \otimes v \otimes \cdots \otimes v$ for some $v \in S^{n-1}$. Instead of using type-2 constant definition, which considers the entire space $P_{n,d}$, if we can re-do our proofs only focusing on the vectors $f_i=v \otimes v \otimes \cdots \otimes v$ we can improve the estimates; see Remark \ref{rem:ea-2}. We obtain such an improvement  for the case of $L_{\infty}$-norm using the following Khintchine type inequality. 
\begin{theorem} [Khintchine inequality for symmetric tensors] \label{thm:Khintch-tensor}
    Let $x_1, \ldots, x_m$ be vectors in $\mathbb R^n$, let $d\in \mathbb N$ and $d\geq 2$, then for any subset $S\subset S^{n-1}$ we have
    \[
        \mathbb E_\varepsilon \sup_{z\in S} \left| \sum_{i=1}^m \varepsilon_i \langle x_i,z\rangle^d \right| \leq 2d 
        \left( \sum_{i=1}^m \|x_i\|_2^{2d} \right)^{1/2} .
    \]
    where $\varepsilon_i$ are independent Rademacher random variables.
    \end{theorem}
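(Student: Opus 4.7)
The plan is to combine the Ledoux-Talagrand contraction principle for Rademacher processes with Cauchy-Schwarz and Jensen. The starting observation is structural: viewing each $\langle x_i,z\rangle$ as a coordinate in $\mathbb R^m$ indexed by $z\in S$, the map $\phi(t)=t^d$ has $|\phi'(t)|=|t|^{d-1}\leq \|x_i\|_2^{d-1}$ on the relevant range $[-\|x_i\|_2,\|x_i\|_2]$, vanishes at the origin, and is therefore $d\|x_i\|_2^{d-1}$-Lipschitz there. After rescaling the $i$-th coordinate by this Lipschitz constant, one obtains genuine $1$-Lipschitz contractions to which the standard contraction principle applies.

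Concretely, I would work with the $1$-Lipschitz functions $\hat\phi_i(s):=d\|x_i\|_2^{d-1}\bigl(s/(d\|x_i\|_2^{d-1})\bigr)^d$ (suitably extended outside the relevant range so as to remain $1$-Lipschitz on all of $\mathbb R$ and to satisfy $\hat\phi_i(0)=0$), applied to the rescaled index set
\[
T=\bigl\{\bigl(d\|x_1\|_2^{d-1}\langle x_1,z\rangle,\ldots,d\|x_m\|_2^{d-1}\langle x_m,z\rangle\bigr):z\in S\bigr\}\subset\mathbb R^m.
\]
By construction $\hat\phi_i(d\|x_i\|_2^{d-1}\langle x_i,z\rangle)=\langle x_i,z\rangle^d$. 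The Ledoux-Talagrand contraction principle then yields
\[
\mathbb E_\varepsilon\sup_{z\in S}\Bigl|\sum_{i=1}^m\varepsilon_i\langle x_i,z\rangle^d\Bigr|\;\leq\;2d\,\mathbb E_\varepsilon\sup_{z\in S}\Bigl|\sum_{i=1}^m\varepsilon_i\,\|x_i\|_2^{d-1}\langle x_i,z\rangle\Bigr|.
\]

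The remaining step is a short linearization. The right-hand supremum equals $\sup_{z\in S}|\langle z,W_\varepsilon\rangle|$ with the random vector $W_\varepsilon:=\sum_{i=1}^m\varepsilon_i\|x_i\|_2^{d-1}x_i$; since $S\subset S^{n-1}$, Cauchy-Schwarz bounds this by $\|W_\varepsilon\|_2$. Jensen combined with the $L^2$-orthogonality of the Rademacher variables then gives
\[
\mathbb E_\varepsilon\|W_\varepsilon\|_2\leq\bigl(\mathbb E_\varepsilon\|W_\varepsilon\|_2^2\bigr)^{1/2}=\Bigl(\sum_{i=1}^m\|x_i\|_2^{2(d-1)}\|x_i\|_2^2\Bigr)^{1/2}=\Bigl(\sum_{i=1}^m\|x_i\|_2^{2d}\Bigr)^{1/2},
\]
which, plugged into the previous display, produces the claimed bound $2d(\sum\|x_i\|_2^{2d})^{1/2}$.

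The only real subtlety is the bookkeeping around the Lipschitz constants. The textbook contraction principle is stated for $1$-Lipschitz maps applied coordinate-wise, whereas the factors $d\|x_i\|_2^{d-1}$ here vary with $i$; care is needed to absorb these into the rescaled maps $\hat\phi_i$ and the rescaled index set $T$ while preserving both the $1$-Lipschitz property and the vanishing-at-zero condition. Apart from this accounting, the argument is a short sequence of one-line reductions, and I do not expect further obstacles.
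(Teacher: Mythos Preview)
Your proposal is correct and follows essentially the same route as the paper: apply the Ledoux--Talagrand contraction principle with coordinate-wise Lipschitz maps that agree with $t\mapsto t^d$ on the relevant range (the paper normalizes to $[-1,1]$ and writes $\varphi_j(t)=\|x_j\|_2^d\varphi(t)$, you work on $[-\|x_i\|_2,\|x_i\|_2]$ directly; the resulting Lipschitz constants $d\|x_i\|_2^{d-1}$ coincide), then finish with Cauchy--Schwarz and the $L^2$-orthonormality of the Rademachers. One small bookkeeping slip: your displayed formula for $\hat\phi_i$ carries an extra factor $d\|x_i\|_2^{d-1}$ that should be dropped---the map you actually want (and use in the very next line) is $\hat\phi_i(s)=(s/(d\|x_i\|_2^{d-1}))^d$, which is $1$-Lipschitz on $|s|\le d\|x_i\|_2^{d}$ and satisfies the identity you state.
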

 As a consequence of Theorem \ref{thm:Khintch-tensor}, we have:
\begin{theorem}[Improved sparsification for $L_{\infty}$-norm] \label{enhancedsparse}
For $f \in P_{n,d}$ and $\varepsilon >0$, we have
\[
    { \rm srank}_{\norm{.}_{\infty}, \varepsilon}(f) \leq \frac{8  d^2 \; \norm{f}_{*}^2}{\varepsilon^2} . 
\]
\end{theorem}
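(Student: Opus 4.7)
The plan is to run Maurey's empirical approximation exactly as in the proof of Theorem \ref{thm:intro2}, but to bypass the ambient type-2 constant $T_2(P_{n,d},\norm{\cdot}_\infty)$ (which by Theorem \ref{type-estimate} scales like $\sqrt{n\log(ed)}$ and would contaminate the bound with the dimension $n$) and instead control the key Rademacher sum using Theorem \ref{thm:Khintch-tensor}, whose bound depends only on the tensor degree $d$. This exploits the fact, anticipated in Remark \ref{rem:ea-2}(2), that in the Maurey argument only the Rademacher type-2 behavior of the rank-one set $S=\{\pm q_v:v\in S^{n-1}\}$ enters, not that of the whole ambient space.

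Set up as follows. Assume $f\neq 0$ and let $p_1:=f/\norm{f}_*$, which lies in $V_{n,d}$ by definition of the nuclear norm. By \eqref{Veronese} there exist $v_1,\ldots,v_\ell \in S^{n-1}$, signs $\eta_i \in \{\pm 1\}$, and weights $\lambda_i \geq 0$ with $\sum_i \lambda_i = 1$ such that $p_1 = \sum_i \lambda_i \eta_i q_{v_i}$. Introduce a random tensor $Z$ taking the value $\eta_i q_{v_i}$ with probability $\lambda_i$; clearly $\mathbb E[Z]=p_1$ and $\norm{Z}_\infty = 1$ almost surely. Let $Z_1,\ldots,Z_m$ be independent copies of $Z$ and $Y_m=\frac{1}{m}\sum_j Z_j$. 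Symmetrizing exactly as in the proof of Lemma \ref{empirical-approximation} yields
\[
\mathbb E \norm{Y_m - p_1}_\infty^2 \leq \frac{4}{m^2}\,\mathbb E \Big\|\sum_{j=1}^m \varepsilon_j Z_j\Big\|_\infty^2.
\]
Because each $Z_j$ has the form $\eta_j q_{v_j}$ with $\norm{v_j}_2 = 1$, and because $\varepsilon_j \eta_j$ is again Rademacher, conditioning on $(\eta_j,v_j)_j$ reduces the inner expectation to $\mathbb E_{\varepsilon'} \sup_{z\in S^{n-1}} |\sum_j \varepsilon_j' \ang{v_j,z}^d|^2$. Theorem \ref{thm:Khintch-tensor} bounds this quantity (after the standard $L^1 \to L^2$ upgrade via Kahane's inequality for Rademacher sums in a Banach space) by $O(d^2 m)$; tracking constants carefully gives $\mathbb E \norm{Y_m - p_1}_\infty^2 \leq 8 d^2/m$.

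Choosing $m = \ceil{8 d^2 \norm{f}_*^2/\varepsilon^2}$ and invoking the first-moment method produces a specific realization with $\norm{Y_m - p_1}_\infty \leq \varepsilon/\norm{f}_*$. Scaling back, the tensor $q := \norm{f}_* Y_m = \frac{\norm{f}_*}{m}\sum_{j=1}^m \eta_j q_{v_j}$ satisfies $\norm{f-q}_\infty \leq \varepsilon$ and has ${\rm srank}(q)\leq m$, giving the claimed bound. I expect the main obstacle to be the $L^1 \to L^2$ passage: Theorem \ref{thm:Khintch-tensor} is stated in $L^1$, so one must either invoke Kahane--Khintchine with the sharp Banach-space constant, or re-derive the proof of Theorem \ref{thm:Khintch-tensor} directly in $L^2$ (for fixed $z\in S^{n-1}$ one has the clean identity $\mathbb E |\sum_j \varepsilon_j \ang{v_j,z}^d|^2 = \sum_j \ang{v_j,z}^{2d} \leq m$, so only the supremum over $z$ requires extra work, e.g.\ via a Dudley-type chaining estimate on the sphere). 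All other steps are direct transcriptions of the proofs of Lemma \ref{empirical-approximation} and Theorem \ref{thm:intro2}.
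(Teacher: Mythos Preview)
Your proposal is correct and follows essentially the same route as the paper: run the Maurey empirical argument of Lemma~\ref{empirical-approximation}/Theorem~\ref{thm:intro2} and replace the ambient type-2 bound by the rank-one Khintchine estimate \eqref{infty-type} from Theorem~\ref{thm:Khintch-tensor}. The paper in fact glosses over the $L^1$-versus-$L^2$ passage you flag (it simply writes ``following the proof of Theorem~\ref{thm:intro2} line by line, but replacing the type-2 estimate \ldots\ with the estimate~\eqref{infty-type}''), so your explicit handling of this point via Kahane--Khintchine or an $L^1$ symmetrization is, if anything, more careful than the paper's own argument.
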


Observe that if $\tau=v \otimes v \otimes \cdots \otimes v$ for some $v \in \mathbb R^n$ then we have $\norm{v}_2^d=\norm{\tau}_{\infty}$. Also note that for the set $S=S^{n-1}$, we have $\sup_{z\in S}  \left| \sum_{j=1}^m \varepsilon_j \langle x_j,z\rangle^d \right| = \norm{\sum_{j=1}^m \varepsilon_j f_j }_{\infty}$, where $f_i:=  x_i \otimes x_i \otimes \ldots \otimes x_i$ for $i=1,2,\ldots,m$.
%where $v_i \in S^{n-1}$, i.e., $\norm{f_i}_{\infty}=1$, 
Hence, by Theorem \ref{thm:Khintch-tensor} we have 
%{\color{red} Alperen: If anyone wants to explain how we get the bound from Theorem \ref{thm:Khintch-tensor}, that would be great. Since the theorem statement does not have squares on the right hand side whereas the bound below has them, we can get a picky referee poking at it.}
\begin{equation} \label{infty-type}
        \mathbb E \left\|\sum_{i=1}^m \varepsilon_i f_i \right\|_{\infty} \leq 2d \left( \sum_{i=1}^m \norm{f_i}_{\infty}^2  \right)^{1/2}. 
\end{equation}       
Following the proof of Theorem \ref{thm:intro2} line by line, but replacing  the type-2 estimate from Theorem \ref{type-estimate} in the proof with the estimate (\ref{infty-type}), we obtain Theorem \ref{enhancedsparse}, provided that $\|f_i\|_\infty=1$.

\begin{remark}
Theorem \ref{enhancedsparse} improves Theorem \ref{thm:intro2} if $d^2 < n$, which is the common situation when one works with tensors. Theorem \ref{thm:intro2} also immediately improves Step (3) in Algorithm \ref{alg:empirical-approx}: One can use $k\asymp \frac{  d^2 \; \norm{f}_{*}^2}{\varepsilon^2} $ when working with the $L_{\infty}$-norm. 
\end{remark}
\begin{proof}[Proof of Theorem \ref{thm:Khintch-tensor}] 

To ease the exposition we present the argument in two steps:

\medskip

\noindent {\it Step 1: Comparison Principle.} Let $T\subset \mathbb R^m$ 
and $\varphi_j:\mathbb R\to \mathbb R$ be functions that satisfy the Lipschitz condition  
$|\varphi_j(t)-\varphi_j(s)| \leq L_j |t-s|$ for all $t,s\in \mathbb R$ and $\varphi_j(0)=0$ for $j=1,2,\ldots, m$. 
If $\varepsilon_1, \ldots, \varepsilon_m$ are independent Rademacher variables, then 
	\[
		\mathbb E \sup_{t\in T} \left| \sum_{j=1}^m \varepsilon_j \varphi_j(t_j) \right| \leq  2 \mathbb E \sup_{t\in T} \left| \sum_{j=1}^m \varepsilon_j L_j t_j \right|.
	\]
This is consequence of a comparison principle due to Talagrand \cite[Theorem 4.12]{Led-Tal-book}).
Indeed, let $S:= \{(L_jt_j)_{j\leq m} \mid t\in T\}$ and let $h_j(s):= \varphi_j(s/L_j)$. Note that $h_j$ are contractions with $h_j(0)=0$ and 
	\[
		\mathbb E \sup_{t\in T} \left| \sum_{j=1}^m \varepsilon_j \varphi_j(t_j) \right| = \mathbb E \sup_{s\in S} \left | \sum_{j=1} \varepsilon_j h_j(s_j)\right|.
	\] Hence, a direct application of \cite[Theorem 4.12]{Led-Tal-book} yields
	\[
		\mathbb E \sup_{s\in S} \left | \sum_{j=1} \varepsilon_j h_j(s_j)\right| \leq 2 \mathbb E \sup_{s\in S} \left| \sum_{j=1}^m \varepsilon_j s_j\right| 
					= 2 \mathbb E \sup_{t\in T} \left| \sum_{j=1}^m \varepsilon_j L_j t_j\right|,	
	\] as desired.

\medskip

\noindent {\it Step 2: Defining Lipschitz maps.} In view of the previous fact it suffices to define appropriate Lipschitz contractions which will permit us
to further bound the Rademacher average from above by a more computationally tractable average.
To this end, we consider the function $\varphi :\mathbb R\to \mathbb R$ which, for $t\geq 0$, it is defined by
		\[
			\varphi(t) : = \begin{cases} 
								t^d, & 0\leq t \leq 1\\
								d(t-1) + 1, & t\geq 1
								\end{cases},
		\] and we extend to $\mathbb R$ via $\varphi(-t) = (-1)^d \varphi(t)$ for all $t$. 
Note that $f$ satisfies $\|\varphi\|_{\rm Lip} = d$. Now we define $\varphi_j:\mathbb R \to \mathbb R$ by $\varphi_j(t): = \|x_j\|_2^d \varphi(t)$ and notice that 
	$\|\varphi_j\|_{\rm Lip} = d \|x_j\|_2^{d} $. Hence, by the the comparison principle (Step 2) for $T = \{ (\langle z,\bar{ x_j} \rangle)_{ j \leq  m} \mid z\in S^{n-1}\}$, where 
	$\bar{x_j} = x_j/\|x_j\|_2$, we obtain
	\[
		\mathbb E \sup_{z\in S^{n-1}} \left| \sum_j \varepsilon_j \langle z,x_j \rangle^d \right| = 
			\mathbb E \sup_{z\in S^{n-1}} \left| \sum_j \varepsilon_j \varphi_j(\langle z,\overline{x_j}\rangle) \right|
					\leq 2d \mathbb E \sup_{z\in S^{n-1}} \left| \sum_j \varepsilon_j \|x_j\|_2^d \langle z, \overline{x_j} \rangle \right|.
	\] 
Lastly, we have 
	\[
	\mathbb E \sup_{z\in S^{n-1}} \left| \sum_j \varepsilon_j \|x_j\|_2^d \langle z, \overline{x_j} \rangle \right| 
			%=  \mathbb E \sup_{z\in S^{n-1}} \left| \sum_j g_j \|x_j\|_2^{d-1} \langle z, x_j \rangle \right|
				= \mathbb E \left \| \sum_j \varepsilon_j \|x_j\|_2^{d-1} x_j \right \|_2,
	\]
and the result follows by applying the Cauchy-Schwarz inequality and taking into account the fact that $(\varepsilon_j)_{j\leq m}$ are orthonormal in $L_2$. 
\end{proof}

\begin{remark}
Let us point out that if $d\geq 2$ is even, then we may slightly improve the quantity of the datum $(x_i)_{i\leq m}$ on the right
hand-side at the cost of a logarithmic term in dimension. Indeed; let $d=2k$, $k\in \mathbb N$, $k\geq 1$. We apply Step 2 for 
$T=\{ (\langle x_j,\theta\rangle^2 )_{j\leq m} \mid \theta \in S^{n-1} \}$ and the even contractions $\varphi_j:\mathbb R\to \mathbb R$ which, for $s\geq 0$, 
are defined by
$\varphi_i(s) = \min\{ \frac{s^k}{k\|x_i\|_2^{2k-2}}, \frac{\|x_i\|_2^2}{k}\}$. Thus, we obtain
	\[
		\mathbb E \left\| \sum_{i=1}^m \varepsilon_i f_i \right\|_\infty \leq d \mathbb E\left\| \sum_{i=1}^m  \varepsilon_i \|x_i\|_2^{d-2} x_i \otimes x_i \right\|_{\rm op}.
	\]
One may proceed in various ways to bound the latter Rademacher average. 
For example we may employ the matrix Khintchine inequality \cite[Exercise 5.4.13.]{Ver-book} to get
	\[
		\mathbb E\left\| \sum_{i=1}^m  \varepsilon_i \|x_i\|_2^{d-2} x_i \otimes x_i \right\|_{\rm op}	\lesssim 
			\sqrt{\log n} \left \| \sum_{i=1}^m \|x_i\|_2^{2d-2}x_i\otimes x_i \right\|_{\rm op}^{1/2}.
	\] Clearly, $\left\| \sum_{i=1}^m \|x_i\|_2^{2d-2}x_i\otimes x_i \right\|_{\rm op}^{1/2} \leq \left( \sum_{i=1}^m \|x_i\|_2^{2d} \right)^{1/2}$.
\end{remark}
\subsection{Comparison with earlier results and open problems}
The quality of approximation provided by Algorithm 2 depends on the constant $c$ with the property that $c \geq \norm{q}_{*}$. It is known that computing the best such $c$, i.e. the nuclear norm (or the nuclear decomposition), is NP-Hard \cite{friedland2018nuclear}. As mentioned in \cref{nuclear-norm}, one can use sum of squares hierarchy to obtain an increasing sequence of lower bounds for symmetric tensor nuclear norm \cite{nienuclear}. Practically, one would like to have a quick decreasing sequence of upper bounds to compare against the increasing sequence of lower bounds coming from sum of squares hierarchy.
\begin{problem}
    Design an efficient randomized approximation scheme  (approximating from above) for the symmetric tensor nuclear norm. 
\end{problem}

Our search for similar results to Theorem \ref{thm:intro2} in the literature yielded the following:
Theorem 5 of \cite{netzer-approximate} used for symmetric tensors would roughly correspond to the special case of Theorem \ref{thm:intro2} for  Schatten-$p$ norms. The focus of \cite{netzer-approximate} is to demonstrate that separation between different notions of tensor ranks is not robust under perturbation. We work only with ${\rm srank}$ and impose no restrictions on the employed norm. We show that the type-2 constant and the nuclear norm universally govern the quality of the empirical approximation in Algorithm \ref{alg:empirical-approx} for any norm.

%%%%%%%%%%%%%%%%%%%%%%%%%%%%%%%%%%%%%%%%%%%%%%%%%%%%%%
\section{Approximate Rank Estimates via Frank-Wolfe} \label{section:Frank-Wolfe}
%%%%%%%%%%%%%%%%%%%%%%%%%%%%%%%%%%%%%%%%%%%%%%%%%%%%%%
\begin{algorithm}
	\caption{Approximate Rank via Frank-Wolfe}\label{alg:FW}
	\begin{algorithmic}[1]
 
		\REQUIRE
		    $\varepsilon>0$,  a starting point $ q \in V_{n,d}$, and step-size strategy $\gamma_k = \frac{2}{k+1}$. 
            \STATE Initialize $p_0=0$.
		\FOR{$k=0$ to $T-1$}
		    \IF{$\norm{p_k - q}_{\rm HS}  < \varepsilon$}
		        \STATE Halt, set $\tilde{q}:=p_k$,  and output $\tilde{q}$.
		    \ELSE
		        \STATE
		            $h_k = \underset{h \in V_{n,d}}{\argmin} \langle h , \nabla F(p_k) \rangle_{\rm HS}$
		            \STATE 
		            $p_{k+1} = p_k + \gamma_k(h_k - p_k)$
		      \ENDIF
		  \ENDFOR  
    \STATE {\bfseries Post-condition} $\norm{q-\tilde{q}}_{\rm HS} \leq \varepsilon$ and ${\rm srank}(\tilde{q}) \leq 8\varepsilon^{-2}$
	\end{algorithmic}
\end{algorithm}
This section presents a supplementary result for the specific case of using a Euclidean norm in Theorem \ref{thm:intro2}. The theoretical result of this section, \cref{HSrank}, is not stronger than what one could obtain using \cref{thm:intro2}. The main difference is that the corresponding algorithm does not require any decomposition of the input tensor, but just needs a guess on the nuclear norm. Another important difference is that the algorithm of this section is the only algorithm in this paper that actually finds the ``latent'' rank-one tensors, and hence is computationally more expensive. Our main purpose is to obtain an alternative proof of \cref{thm:intro2} for the case Euclidean norms with an easier argument, and we don't hope for computational efficiency.  On the other hand, popular tensor decomposition methods, such as \cite{kolpa}, report practical efficiency and at the same time involve similar expensive optimization subroutines as the one used in Algorithm \ref{alg:FW}. This suggests there might be room for experimentation to see if Algorithm \ref{alg:FW} is useful for particular benchmark problems, which we have to leave to the interested readers due to time constraints.

The algorithm is based on optimizing an objective function on the Veronese body that was defined in Section \ref{nuclear-norm}. More precisely, given $q \in V_{n,d}$ we consider the objective function    
    \begin{equation*}    \label{objective-FW}
    F(p):= \frac{1}{2}\norm{p-q}^2_{\rm HS},
    \end{equation*}
and we minimize the objective function on $V_{n,d}$. The algorithm, in return, constructs a low-rank approximation of $q$, and the number of steps taken by the algorithm controls the rank of its output.

Each recursive step in the algorithm is solved directly over the constraint set $V_{n,d}$: So every linear function involved attains the minimum at some extreme point of $V_{n,d}$ given by $ \pm v \otimes \ldots \otimes v$ for some $v \in S^{n-1}$. Therefore, the $h_i$'s produced in step 5 are always rank-1 symmetric tensors. In the end, the number of steps of the algorithm controls the ${\rm srank}$ of the output $p_k$.
\begin{lemma} \label{fw:stepsize}
Algorithm \ref{alg:FW} terminates in at most $\ceil{8/\varepsilon^2}$  many steps.
\end{lemma}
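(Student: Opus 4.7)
The plan is to run the standard Frank–Wolfe convergence analysis, specialized to the fact that the objective $F(p) = \tfrac12 \|p-q\|_{\rm HS}^2$ is quadratic and that the target $q$ itself lies in the feasible set $V_{n,d}$. The latter forces $\min_{V_{n,d}} F = 0$, so the optimality gap coincides with $F(p_k)$, and hence with $\tfrac12 \|p_k - q\|_{\rm HS}^2$.

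First I would gather the two geometric inputs. Let $e_k := p_k - q$. Since every extreme point of $V_{n,d}$ has HS-norm $1$, every element of $V_{n,d}$ has HS-norm at most $1$, so
\[
   \|h_k - p_k\|_{\rm HS} \;\leq\; \|h_k\|_{\rm HS} + \|p_k\|_{\rm HS} \;\leq\; 2.
\]
Next, because $h_k$ minimises $h \mapsto \langle h, \nabla F(p_k)\rangle_{\rm HS} = \langle h, e_k\rangle_{\rm HS}$ over $V_{n,d}$ and $q \in V_{n,d}$, one has $\langle h_k, e_k\rangle_{\rm HS} \leq \langle q, e_k\rangle_{\rm HS}$, equivalently
\[
   \langle h_k - p_k, e_k\rangle_{\rm HS} \;\leq\; \langle q - p_k, e_k\rangle_{\rm HS} \;=\; -\|e_k\|_{\rm HS}^2.
\]

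Next I would plug the update $e_{k+1} = e_k + \gamma_k(h_k - p_k)$ into $\|e_{k+1}\|_{\rm HS}^2$ and combine the two bounds above to obtain the basic recursion
\[
   \|e_{k+1}\|_{\rm HS}^2 \;\leq\; (1 - 2\gamma_k)\, \|e_k\|_{\rm HS}^2 + 4\gamma_k^2.
\]
With the step-size $\gamma_k = 2/(k+2)$ (interpreted as the standard FW schedule, up to an index shift with the statement in the algorithm), a straightforward induction on $k$ — with base case $\|e_0\|_{\rm HS}^2 = \|q\|_{\rm HS}^2 \leq 1$ — yields
\[
   \|e_k\|_{\rm HS}^2 \;\leq\; \frac{8}{k+2}, \qquad k \geq 0.
\]
The inductive step is the only calculation: substitute the bound into the recursion, simplify the right-hand side to $8k/(k+2)^2$, and check that $8k/(k+2)^2 \leq 8/(k+3)$, which is equivalent to $0 \leq k+4$.

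Finally, the stopping criterion $\|p_k - q\|_{\rm HS} < \varepsilon$ is satisfied as soon as $8/(k+2) \leq \varepsilon^2$, i.e. after at most $\lceil 8/\varepsilon^2\rceil$ iterations, proving the lemma. The main obstacle is simply tracking the constant through the induction; conceptually, both the diameter bound and the gap inequality exploit the same fact that $V_{n,d}$ is a centrally symmetric convex body of HS-radius $1$ containing the target $q$, so there is no deeper ingredient beyond the textbook Frank–Wolfe argument. I would also note, as a remark, that since each $h_k$ is an extreme point $\pm p_{v_k}$ of $V_{n,d}$, the output $\tilde q$ is a convex combination of at most $\lceil 8/\varepsilon^2\rceil$ rank-one symmetric tensors, giving the claimed rank bound in the post-condition.
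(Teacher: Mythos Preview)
Your argument is the standard Frank--Wolfe descent analysis and coincides with the paper's proof: both use the diameter bound $\|h_k-p_k\|_{\rm HS}\le 2$ coming from the HS-radius of $V_{n,d}$, the linear-minimization property of $h_k$ together with $q\in V_{n,d}$ to control the first-order term, and then solve the resulting one-step recursion by induction with the step-size $\gamma_k\asymp 2/k$. The only cosmetic difference is that you work directly with $\|e_k\|_{\rm HS}^2$ via the exact quadratic expansion, obtaining $\|e_{k+1}\|_{\rm HS}^2\le(1-2\gamma_k)\|e_k\|_{\rm HS}^2+4\gamma_k^2$, whereas the paper passes through the convexity inequality $\langle q-p_k,\nabla F(p_k)\rangle\le F(q)-F(p_k)$ and tracks $\delta_k=F(p_k)$.

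One small point deserves care. With $\gamma_k=2/(k+2)$ the factor $1-2\gamma_k=(k-2)/(k+2)$ is \emph{negative} for $k\in\{0,1\}$, so you cannot substitute the hypothesis $\|e_k\|_{\rm HS}^2\le 8/(k+2)$ into $(1-2\gamma_k)\|e_k\|_{\rm HS}^2$ without reversing the inequality; your ``simplify to $8k/(k+2)^2$'' step is therefore only valid from $k\ge 2$ on. The repair is immediate. For $k=1$ the crude bound $(1-2\gamma_1)\|e_1\|_{\rm HS}^2\le 0$ already yields $\|e_2\|_{\rm HS}^2\le 4\gamma_1^2=16/9\le 8/4$. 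For the step $k=0\to 1$ a direct computation is needed: since $\gamma_0=1$ one has $p_1=h_0$, and because $q\in V_{n,d}$ competes in the oracle one gets $\langle h_0,q\rangle_{\rm HS}\ge\|q\|_{\rm HS}^2$, hence $\|e_1\|_{\rm HS}^2=\|h_0\|_{\rm HS}^2-2\langle h_0,q\rangle_{\rm HS}+\|q\|_{\rm HS}^2\le 1-\|q\|_{\rm HS}^2\le 1\le 8/3$. With these two checks in hand, your induction runs cleanly for $k\ge 2$ and the lemma follows.
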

\begin{proof}[Proof of Lemma \ref{fw:stepsize}]
Recall that $F(p)=\frac{1}{2}\norm{p-q}_{\rm HS}^2$, so we have that $\nabla F (p)=  -q + p$ for all $p$. Therefore, for every $g_1$ and $g_2$ we have
\[ F(g_2) - F(g_1) = \frac{1}{2} \norm{g_2-g_1}_{\rm HS}^2 + \langle g_2 - g_1 , \nabla F(g_1)  \rangle_{\rm HS}. \]
This gives the following:
\begin{align*}
        F(p_{k+1})-F(p_k) 
        &= \ang{p_{k+1}-p_k, \nabla F(p_k)} + \frac{1}{2}\norm{p_{k+1}-p_k}^2_{\rm HS}\\
        &= \gamma_k\ang{h_k-p_k,\nabla F(p_k)}  + \frac{1}{2}\gamma_k^2\norm{h_k-p_k}^2_{\rm HS}\\
        &\leq \gamma_k\ang{h_k-p_k,\nabla F(p_k)}  + 2  \gamma_k^2 \\
        &\leq \gamma_k\ang{q-p_k,\nabla F(p_k)} + 2 \gamma_k^2\\
        &\leq \gamma_k(F(q)-F(p_k)) + 2  \gamma_k^2 .
\end{align*}
    Setting $\delta_k = F(p_k)-F(q)$ the inequality reads
\begin{equation*}
\delta_{k+1} - \delta_k  \leq - \gamma_k \delta_k  + 2  \gamma_k^2
\end{equation*}
that is
\begin{equation*}
\delta_{k+1} \leq (1-\gamma_k)\delta_k + 2 \gamma_k^2 .
\end{equation*}
Using $\gamma_k=\frac{2}{k+1}$  we obtain
    \[ F(p_{k+1})-F(q) \leq \frac{8}{k+1}.
    \]
Hence, given a desired level of accuracy $\varepsilon>0$ the algorithm terminates in at most $\ceil{\frac{8}{\varepsilon^2}}$ steps.
\end{proof}
Note that for any $f \in P_{n,d}$ we have $\frac{f}{\norm{f}_{*}} \in V_{n,d}$. Thus as a corollary of Lemma \ref{fw:stepsize}, using $\frac{\varepsilon}{\norm{f}_{*}}$, we obtain the following rank estimate.
\begin{corollary} \label{HSrank}
Let $f \in P_{n,d}$, then we have
\[ {\rm srank}_{\norm{\cdot}_{\rm HS}, \varepsilon} (f) \leq \frac{8 \norm{f}_{*}^2}{\varepsilon^2}. \]
\end{corollary}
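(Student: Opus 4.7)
The plan is to derive Corollary \ref{HSrank} directly from Lemma \ref{fw:stepsize} by a rescaling argument. Since $V_{n,d}$ is, by construction, the unit ball of the nuclear norm (via the gauge formula \eqref{gauge-norm}), the normalized tensor $q := f/\norm{f}_{*}$ lies in $V_{n,d}$ whenever $f \neq 0$ (the case $f=0$ is trivial). I would therefore feed $q$ into Algorithm \ref{alg:FW} with rescaled accuracy parameter $\varepsilon' := \varepsilon/\norm{f}_{*}$.

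By Lemma \ref{fw:stepsize}, the algorithm then terminates in at most $T \leq \lceil 8/(\varepsilon')^2 \rceil = \lceil 8\norm{f}_{*}^2/\varepsilon^2 \rceil$ iterations and returns a point $\tilde{q} \in V_{n,d}$ satisfying $\norm{\tilde{q} - q}_{\rm HS} \leq \varepsilon'$. The structural observation already highlighted in the algorithm description is that the subproblem $\min_{h \in V_{n,d}} \langle h, \nabla F(p_k) \rangle_{\rm HS}$ is a linear program over $V_{n,d} = {\rm conv}\{\pm q_v : v \in S^{n-1}\}$, so its optimum is attained at an extreme point of the form $\pm q_v$. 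Since $p_0 = 0$ and $p_{k+1} = p_k + \gamma_k(h_k - p_k)$ is a convex combination of the previous iterate and a single rank-one symmetric tensor, induction on $k$ shows that $\tilde{q}$ is a linear combination of at most $T$ rank-one symmetric tensors; hence ${\rm srank}(\tilde{q}) \leq T$.

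To finish, I would set $\tilde{f} := \norm{f}_{*} \tilde{q}$ and verify both conclusions simultaneously: the symmetric rank is preserved under scaling, so ${\rm srank}(\tilde{f}) = {\rm srank}(\tilde{q}) \leq 8\norm{f}_{*}^2/\varepsilon^2$, while $\norm{\tilde{f} - f}_{\rm HS} = \norm{f}_{*} \norm{\tilde{q} - q}_{\rm HS} \leq \norm{f}_{*} \cdot \varepsilon' = \varepsilon$, which by Definition \ref{approximaterank} gives the claimed bound on ${\rm srank}_{\norm{\cdot}_{\rm HS}, \varepsilon}(f)$. There is no substantive obstacle beyond what Lemma \ref{fw:stepsize} already supplies; the corollary is a one-line scaling argument on top of the Frank--Wolfe convergence rate, and the only care required is the inductive verification that each iterate of Algorithm \ref{alg:FW} has symmetric rank bounded by its iteration count.
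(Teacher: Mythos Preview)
Your proposal is correct and mirrors the paper's own argument exactly: the paper simply notes that $f/\norm{f}_* \in V_{n,d}$ and applies Lemma \ref{fw:stepsize} with accuracy $\varepsilon/\norm{f}_*$, which is precisely your rescaling. Your additional verification that each Frank--Wolfe iterate has symmetric rank bounded by its iteration count is also the same observation the paper makes in the paragraph preceding Lemma \ref{fw:stepsize}.
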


Lemma~\ref{fw:stepsize} controls the number of steps in the Frank-Wolfe type algorithm. Thus, the remaining piece in complexity analysis is to understand the computational complexity of Step 6. First, we observe that $\nabla F(p_k)=-q+p_k$ and $h_k = \underset{h \in V_{n,d}}{\argmin} \langle h , p_k-q \rangle_{\rm HS}$. In other words, $h_k= q_{v_k}$ for which we have $(q-p_k)(v_k) = \max_{v \in S^{n-1}} (q-p_k)(v)$. Therefore, finding $h_k$ is equivalent to optimizing $q-p_k$ on the sphere $S^{n-1}$. This optimization step is indeed expensive (NP-Hard for $d \geq 4$). Here we content ourselves by providing an estimate on the complexity of Step 6. 
\begin{lemma} \label{covering}
Given $p \in P_{n,d}$, one can find $v \in S^{n-1}$ with
\[ 
    |p(v)| \leq \max_{z \in S^{n-1}} |p(z)| \leq  \frac{1}{1-\eta^2} |p(v)|
\]
by computing at most $O(( 3 d/ \eta)^n)$ many pointwise evaluations of $p$ on $S^{n-1}$.
\end{lemma}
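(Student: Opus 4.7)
The plan is a covering argument on $S^{n-1}$, but with a second-order (rather than first-order) Lipschitz estimate obtained by exploiting that the sphere maximizer of $|p|$ is a critical point. This is what produces the quadratic $1-\eta^2$ gap and consequently a net of size $(3d/\eta)^n$ rather than the naive $(3d/\eta^2)^n$.

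First, I would take a standard $\delta$-net $N\subset S^{n-1}$ in the Euclidean metric, of cardinality $|N|\leq (3/\delta)^n$, with $\delta$ a small multiple of $\eta/d$ to be fixed below. The algorithm simply evaluates $p$ at every $u\in N$ (hence the claimed number of evaluations) and outputs a point $v\in N$ attaining $\max_{u\in N}|p(u)|$. The lower inequality $|p(v)|\leq \max_{z\in S^{n-1}}|p(z)|$ is trivial; the content of the lemma is the upper inequality.

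The key step is to show that for any $z^{\ast}\in S^{n-1}$ attaining $M:=\max_{z}|p(z)|$ and any $v'\in N$ with $\|v'-z^{\ast}\|_2\leq \delta$, one has $|p(v')|\geq (1-\eta^2)M$. Replacing $p$ by $-p$ if necessary we may assume $p(z^{\ast})=M>0$ (the case $M=0$ is trivial). Parametrize the great-circle from $z^{\ast}$ towards $v'$ by $\gamma(t)=\cos(t)z^{\ast}+\sin(t)w$ with $w\in S^{n-1}$, $w\perp z^{\ast}$, and let $\theta\in[0,\pi]$ satisfy $\gamma(\theta)=v'$. Expanding $p(\gamma(t))$ via the underlying symmetric multilinear form shows that $f(t):=p(\gamma(t))$ is a trigonometric polynomial in $t$ of degree at most $d$ with $\|f\|_\infty\leq M$. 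Since $z^{\ast}$ is a maximizer of $p$ on $S^{n-1}$, the Lagrange condition yields $\nabla p(z^{\ast})\parallel z^{\ast}$, so $f'(0)=\langle\nabla p(z^{\ast}),w\rangle=0$. Bernstein's inequality for trigonometric polynomials then gives $\|f''\|_\infty\leq d^2\|f\|_\infty\leq d^2 M$, and Taylor's formula with integral remainder produces
\[
    f(\theta)\;\geq\; f(0)-\tfrac{d^2\theta^2}{2}M\;=\;\Bigl(1-\tfrac{d^2\theta^2}{2}\Bigr)M.
\]

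To finish, I relate $\theta$ to $\|v'-z^{\ast}\|_2=2\sin(\theta/2)\leq \delta$, which gives $\theta\leq \tfrac{\pi}{2}\delta$, hence $\tfrac{d^2\theta^2}{2}\leq \tfrac{\pi^2}{8}d^2\delta^2$. Choosing $\delta:=\tfrac{2\sqrt{2}}{\pi}\cdot \tfrac{\eta}{d}$ makes $\tfrac{\pi^2}{8}d^2\delta^2=\eta^2$, and therefore $|p(v')|\geq(1-\eta^2)M$. Since $v\in N$ is a maximizer of $|p|$ over $N$, we have $|p(v)|\geq |p(v')|\geq(1-\eta^2)M$, which is equivalent to $M\leq (1-\eta^2)^{-1}|p(v)|$. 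The total number of evaluations is $|N|\leq (3/\delta)^n=O((3d/\eta)^n)$, absorbing the constant $\tfrac{\pi}{2\sqrt{2}}$ in the $O$-notation.

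The main obstacle — and the only nontrivial input — is the second-order estimate: the first-order optimality $f'(0)=0$ combined with Bernstein's bound $\|f''\|_\infty\leq d^2\|f\|_\infty$ is what upgrades a linear-in-$\eta$ gap (which a naive Lipschitz argument would give) to the desired quadratic gap. Verifying that $f$ is genuinely a degree-$d$ trigonometric polynomial (so that Bernstein applies with degree $d$, not $2d$ or worse) is straightforward from the multilinear expansion of $p$, but is the step on which the whole bound rests.
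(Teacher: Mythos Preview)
Your argument is correct. The paper does not actually give its own proof of this lemma; it merely states that the result ``follows from a standard covering argument'' and refers the reader to Proposition~4.5 of \cite{cucker2021}. Your write-up supplies a complete, self-contained derivation, and the mechanism you identify --- first-order optimality at the spherical maximizer (giving $f'(0)=0$) combined with Bernstein's inequality for the degree-$d$ trigonometric polynomial $f(t)=p(\gamma(t))$ --- is precisely what turns the naive linear gap into the quadratic $1-\eta^2$ and hence yields a net of size $(Cd/\eta)^n$ rather than $(Cd/\eta^2)^n$. All the ingredients check: $f$ really is a trigonometric polynomial of degree at most $d$ (so Bernstein applies with $d$, not $2d$), the chord--angle bound $\theta\le \tfrac{\pi}{2}\delta$ is valid, and the residual constant $\tfrac{\pi}{2\sqrt{2}}$ in the base is legitimately absorbed into the $O((3d/\eta)^n)$ notation as the paper uses it.
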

This lemma follows from a standard covering argument, see Proposition 4.5 of \cite{cucker2021} for an exposition. An alternative approach to polynomial optimization is the sum of squares (SOS) hierarchy: For the case of optimizing a polynomial on the sphere using SOS,  the best current result seems to be \cite[Theorem 1]{fawzi}. This result shows that SOS produces a constant error approximation to $\norm{p}_{\infty}$ of a degree-$d$ symmetric tensor $p$ with $n$ variables in its $(nc_n)$-th layer, where $c_n$ is a constant depending on $n$ . In terms of algorithmic complexity, this means SOS is proved to produce a constant error approximation with $O(n^n)$ complexity. So, for the cases $d < n$ the simple lemma above seems stronger than state of the art theorems for the sum of squares approach.
\begin{remark}
The Frank-Wolfe algorithm in this section is quite natural. However, we couldn't locate any earlier use of this algorithm for symmetric tensor decomposition. We do not know the earliest appearance of this idea in different settings; as far as we are able to locate, the beautiful paper \cite{pokutta} deserves the credit.
\end{remark}

\section{An application to optimization}
This section concerns the optimization of symmetric $d$-tensors for even $d$ when $\norm{p}_{*}$ is small. Suppose one has $p=\sum_{i} c_i v_i \otimes v_i \otimes \ldots \otimes v_i$ where $\sum_{i} \abs{c_i} \leq c \norm{p}_{*}$ for some constant $c$. If we are given a decomposition with this property then we can approximate $\norm{p}_{\infty}$ in a reasonably fast and accurate way: We first apply Algorithm \ref{alg:empirical-approx} to $p$, that is, we compute $q \in P_{n,d}$ such that $\norm{p-q}_{HS} \leq \varepsilon$ and
    \[
    q = \frac{1}{m} \sum_{i=1}^m v_i \otimes v_i \otimes \ldots \otimes v_i,
    \]
where ${\rm srank}(q) = m  \leq \ceil{\frac{c^2 \norm{p}_{*}^2}{\varepsilon^2}}$. Also notice that
    \[ 
    \abs{ \norm{p}_{\infty} - \norm{q}_{\infty} }  \leq  \norm{p-q}_{\infty} \leq \norm{p-q}_{\rm HS} \leq  \varepsilon.
    \]
The next step is to compute $\norm{q}_{\infty}$ and an approach is offered by Lemma~\ref{lem:Barv-ineq}. First, observe that 
    \[ \norm{q}_{2k}^{2k} =  \frac{1}{m^{2k}} \sum_{1 \leq i_1, i_2, \ldots, i_k \leq m} \int_{S^{n-1}} \prod_{j=1}^k \langle x , v_{i_j} \rangle^d \;\sigma(x) \]
and note that there are $\binom{m+k-1}{k} = O(k^m)$ many summands in the expression of $\norm{q}_{2k}^{2k}$. In addition, the values of these  summands are given by a Gamma-like function at the vectors $v_1,v_2,\ldots, v_m$. Second, observe that  for $k \gtrsim \frac{n}{\varepsilon}\ln(ed/\varepsilon)$ we have $(edk/n)^{\frac{n}{2k}}<1+\varepsilon$. So, for $k > \frac{cn}{\varepsilon} \ln(\frac{ed}{\varepsilon})$ using Lemma \ref{lem:Barv-ineq} and Stirling's estimate one has
\[ 
    \norm{q}_{2k} \leq \norm{q}_{\infty} \leq \left( \frac{edk}{n} \right)^{\frac{n}{2k}} \norm{q}_{2k} \leq  (1+\varepsilon) \norm{q}_{2k}.  
\]
In return, for $k\asymp \frac{n}{\varepsilon} \ln(\frac{ed}{\varepsilon})$ we can calculate
\[  
    \norm{q}_{2k} - \varepsilon \leq \norm{p}_{\infty}  \leq (1+\varepsilon) \norm{q}_{2k} + \varepsilon  
\]
by computing $O\left( (\frac{n \ln (ed)}{\varepsilon^2})^m \right)$ many summands. In principle, this approach gives an algorithm that operates in time $O \left( (n \ln(ed))^{\frac{c^2 \norm{p}_{*}^2}{\varepsilon^2}} \right)$.  However, one must be aware of potential numerical issues due to integration of high degree terms.

In addition to the above, there is an alternative approach coming from \cite{alp} with advantages in numerical computations. After we compute $ q = \frac{1}{m} \sum_{i=1}^m  v_i \otimes v_i \otimes \ldots \otimes v_i$, it is possible to exploit the fact that $q \in W:= {\rm span} \{ v_i \otimes v_i \otimes \ldots \otimes v_i : 1 \leq i \leq m \}$ and  $\dim W \leq \ceil{\frac{c^2 \norm{p}_{*}^2}{\varepsilon^2}}$. The approach presented in Theorem 1.6 of \cite{alp} gives a $1-\frac{1}{n}$ approximation to $\norm{q}_{\infty}$ using $O \left( n^{\frac{c^2 \norm{p}_{*}^2 }{\varepsilon^2}} \right)$ many pointwise evaluations. Moreover, this approach has the advantage of being simple and using only degree-$d$ tensors.  The following theorem summarizes the discussion in this section.
\begin{theorem}
Let $p=\sum_{i} c_i v_i \otimes v_i \otimes \ldots \otimes v_i$ where $\sum_{i} \abs{c_i} \leq c \norm{p}_{*}$ . Then using  Algorithm \ref{alg:empirical-approx} and  the results of \cite{alp}:
\begin{itemize}
     \item we compute a $q \in P_{n,d}$ such that  ${\rm srank} (q) \leq \frac{c^{2}\norm{p}_{*}^2}{\varepsilon^2}$ and $\abs{\norm{p}_{\infty}-\norm{q}_{\infty}} \leq \varepsilon$, 
    \item we compute a $1-\frac{1}{n}$ approximation of $\norm{q}_{\infty}$, with high probability, using $O( n^{\frac{c^2 \norm{p}_{*}^2}{\varepsilon^2}} )$ many pointwise evaluations of $q$ on the sphere $S^{n-1}$.
\end{itemize}  
\end{theorem}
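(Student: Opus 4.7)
The plan is to formalize the two-step procedure described in the paragraphs preceding the theorem: first sparsify $p$ with Algorithm \ref{alg:empirical-approx} using the Hilbert--Schmidt norm, and then optimize the resulting low-rank $q$ over $S^{n-1}$ by the subspace-based scheme of \cite{alp}.

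For the first bullet, I would feed the decomposition $p=\sum_i c_i\,v_i^{\otimes d}$ into Algorithm \ref{alg:empirical-approx} with norm $\norm{\cdot}_{\rm HS}$ and tolerance $\varepsilon$. Since the Hilbert--Schmidt norm is Euclidean, Lemma \ref{type2-prop}(2) gives $T=T_2(P_{n,d},\norm{\cdot}_{\rm HS})=1$, and since $\norm{v^{\otimes d}}_{\rm HS}=1$ on $S^{n-1}$ the hypothesis of \cref{thm:intro2} is met. Using $\sum_i\abs{c_i}\le c\norm{p}_{*}$, the post-condition of the algorithm returns
\[
q=\frac{1}{m}\sum_{j=1}^m \mathrm{sgn}(c_{\lambda_j})\,v_{\lambda_j}^{\otimes d},\qquad \norm{p-q}_{\rm HS}\le \varepsilon,
\]
with $m\le 4\varepsilon^{-2}c^2\norm{p}_{*}^2$; the factor $4$ is harmless constant bookkeeping that can be absorbed into $c$ or handled by applying the algorithm with tolerance $\varepsilon/2$. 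The identity $\norm{g}_\infty=\max_{v\in S^{n-1}}\abs{\langle g,v^{\otimes d}\rangle_{\rm HS}}$ from \eqref{nuclear}, together with Cauchy--Schwarz and $\norm{v^{\otimes d}}_{\rm HS}=1$, gives $\norm{g}_\infty\le \norm{g}_{\rm HS}$ on all of $P_{n,d}$. Applying this to $g=p-q$ and using the reverse triangle inequality for $\norm{\cdot}_\infty$ yields $\abs{\norm{p}_\infty-\norm{q}_\infty}\le\norm{p-q}_\infty\le\varepsilon$, as required.

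For the second bullet, I would observe that by construction the output $q$ of Algorithm \ref{alg:empirical-approx} lives in the subspace
\[
W:=\mathrm{span}\{v_{\lambda_j}^{\otimes d}:j=1,\ldots,m\}\subseteq P_{n,d},\qquad \dim W \le m \le c^2\norm{p}_{*}^2/\varepsilon^2.
\]
The scheme of \cite{alp} (their Theorem 1.6, invoked verbatim in the discussion above) produces, with high probability, a $(1-\tfrac{1}{n})$-approximation of $\max_{v\in S^{n-1}}\abs{h(v)}$ for any $h$ in a fixed subspace $W\subseteq P_{n,d}$ using $O(n^{\dim W})$ pointwise evaluations of $h$ on $S^{n-1}$. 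Taking $h=q$ and inserting the bound on $\dim W$ yields the stated complexity $O\bigl(n^{c^2\norm{p}_{*}^2/\varepsilon^2}\bigr)$.

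No step is conceptually difficult: Algorithm \ref{alg:empirical-approx} already carries the required post-condition, and the sphere-optimization step is a black-box invocation of \cite{alp}. The only point of mild care is the choice of norm inside the sparsification: one must use $\norm{\cdot}_{\rm HS}$ (rather than $\norm{\cdot}_\infty$ via \cref{enhancedsparse}) so that the type-$2$ constant is exactly $1$ and the rank bound is free of a $d^2$ factor, and then transfer the estimate to $\norm{\cdot}_\infty$ through the Euclidean domination $\norm{\cdot}_\infty\le\norm{\cdot}_{\rm HS}$.
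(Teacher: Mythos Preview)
Your proposal is correct and follows essentially the same route as the paper's own argument (which is the discussion in Section~6 preceding the theorem): apply Algorithm~\ref{alg:empirical-approx} with the Hilbert--Schmidt norm so that $T_2=1$, use $\norm{\cdot}_\infty\le\norm{\cdot}_{\rm HS}$ to transfer the error bound, and then invoke Theorem~1.6 of \cite{alp} on the at-most-$m$-dimensional span of the sampled rank-one tensors. Your handling of the stray factor $4$ and your explicit justification of $\norm{\cdot}_\infty\le\norm{\cdot}_{\rm HS}$ via \eqref{nuclear} and Cauchy--Schwarz are exactly the missing details the paper leaves implicit.
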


\section{Acknowledgments}
We thank Jiawang Nie for answering our questions on optimization of low-rank symmetric tensors using sum of squares. We thank Sergio Cristancho and Mauricio Velasco for explaining the mathematical underpinning of their quadrature rule in \cite{mauricio}, and allowing us to implement it in Python. We thank Carlos Castro Rey for his valuable help and feedback on the Python implementation. A.E. was partially supported by NSF CCF 2110075. P.V. is supported by Simons Foundation grant 638224.

\bibliography{approxtensor.bib}
\bibliographystyle{amsalpha}

\end{document}